\documentclass[12pt]{amsart}
\usepackage[a4paper,top=1in,bottom=1in,left=1in,right=1in,marginparwidth=1.75cm]{geometry}

\usepackage{amsmath,amssymb,mathtools}
\usepackage{enumitem,makecell,caption,longtable}
\setlist[enumerate,1]{font=\textnormal} 
\usepackage[colorlinks=true, allcolors=blue]{hyperref}

\newcommand{\mbf}[1]{\mathbf{#1}}
\newcommand{\mr}[1]{\mathrm{#1}}

\newcommand{\Z}{\mathbb{Z}}
\newcommand{\C}{\mathbb{C}}
\newcommand{\Zen}[1]{\mathbf{Z}(#1)}

\newcommand{\Cen}[2]{\mathbf{C}_{#1}(#2)}
\newcommand{\Norm}[2]{\mathbf{N}_{#1}(#2)}
\newcommand{\nrm}{\trianglelefteq}
\newcommand{\Irr}[1]{\mathrm{Irr}(#1)}
\newcommand{\wh}{\widehat}
\newcommand{\q}[1]{\overline{#1}}

\theoremstyle{definition}
\newtheorem{theorem}{Theorem}[section]

\newtheorem{example}[theorem]{Example}
\newtheorem{remark}[theorem]{Remark}
\newtheorem{conjecture}[theorem]{Conjecture}
\newtheorem{corollary}[theorem]{Corollary}

\title{Refining invariants of finite groups with class functions}

\author{Christopher A. Schroeder}
\address{HUN-REN Alfr\'ed R\'enyi Institute of Mathematics, Re\'altanoda utca 13-15, H-1053 Budapest, Hungary}
\email{schroeder.chris@renyi.hu}
\date{\today}

\begin{document}

\begin{abstract}
Many numerical invariants of a finite group arise as the multiplicity of the
trivial character in a naturally associated class function, and the remaining
multiplicities carry finer structural information. For the number of conjugacy
classes of elements whose order involves only a chosen set of primes, the
natural such class function is a generalized character interpolating between
the classical conjugating character and the regular character. We prove that it is a character in two important general cases, obtained independently by Robinson, and we rule out a family of groups and characters in which, we argue, a counterexample would be most likely to arise. Furthermore, we characterize when these class functions are compatible with passing to a subgroup and thereby sharpen a theorem of Sangroniz. The analogous class function for real elements is shown to always give a character. Finally, we give a dual construction that, together with the recently proved McKay Conjecture, yields a best-possible criterion for a finite group to have an abelian Sylow $p$-subgroup.
\end{abstract}

\maketitle

\section{Introduction}
There is a long history of using invariants to unravel the structure of mathematical objects. By a group invariant we mean a function assigning to each finite group $G$ a complex number $I_G$ such that $I_G = I_H$ whenever $G \cong H$. As nonisomorphic groups could have the same value of an invariant, one would often like to refine, or strengthen, an invariant to obtain more detailed information about group structure. We take up this problem here in the following way: Given an invariant $I_G$, we seek a complex-valued class function $\mathbf{I}_G:G\rightarrow \C$ such that, in a chosen orthonormal basis $\{e_1,\dots,e_n\}$ of the space of class functions, the class function $\mbf{I}_G$ has our invariant $I_G$ as the weight $[\mbf{I}_G,e_1]$ of its fundamental component~$e_1$. Such a class function always exists, and the construction need not be unique; indeed, $\mbf{I}_G = I_G \, e_1$ is a trivial way to construct such a class function. However, even trivial constructions can be interesting, as we will see in Example~\ref{ex:sang}. In general, our goal is to find a natural, interesting or useful class function $\mbf{I}_G$ satisfying $I_G = [\mbf{I}_G,e_1]$.

The idea has an analogy in Fourier analysis and music: If our group $G$ were a tuned drum, then $\mbf{I}_G$ captures its frequency spectrum when struck, where~$I_G=[\mbf{I}_G,e_1]$ is the weight of the fundamental frequency and the multiplicities $I_G^{e_i}:=[\mbf{I}_G,e_i]$ ($i>1$) are the weights of the ``higher harmonics''. Seen in this light, one might hope that $\mathbf{I}_G$ can discriminate between groups that have the same value of $I_G$ in the same way that some musical instruments, such as a trumpet and a cornet, have the same fundamental frequency but can be distinguished by their distributions of higher frequencies, their so-called timbre.

Given such a class function $\mathbf{I}_G$, some natural questions immediately arise:
\begin{enumerate}[nolistsep,label=(\arabic*)]
    \item Is $\mathbf{I}_G$ a (generalized) character? Does there exist some $c \in \C$ such that $c \mathbf{I}_G$ is a (generalized) character?
    \item What can we say about the multiplicities $I_G^{e_i}$ ($i>1$)? For example, when are they nonzero? If they are integers, do they encode any interesting combinatorial information?
    \item What group structure does $\mathbf{I}_G$ capture? For example, what is the relationship between a group $G$ and a subgroup $H \le G$ if $\mbf{I}_G=(\mbf{I}_H)^G$?
\end{enumerate}
Note that if we choose $e_1$ to be the trivial character $1_G$, then the condition that \mbox{$\mbf{I}_G=(\mbf{I}_H)^G$} in Question~3 is a true refinement of $I_G = I_H$ since we have by Frobenius reciprocity that
\begin{align*}
    I_G = [\mbf{I}_G,1_G] = [(\mbf{I}_H)^G,1_G] = [\mbf{I}_H,1_H] = I_H.
\end{align*}
We will address Questions (1)--(3) for some invariants $I_G$ and refining class functions $\mbf{I}_G$ in this note. 

The layout of the paper is as follows. In Section~\ref{s:setup}, we define two natural bases for the space of complex class functions on a group. We then describe some previous results from the literature that can be recast in the class function language. In Sections~\ref{s:props}--\ref{s:Q3}, we define some new class functions that refine invariants related to conjugacy classes of a finite group, and we address Questions~(1)--(3) above for these class functions. Finally, in Section~\ref{s:dual}, we consider a dual construction and prove a best-possible criterion for a finite group to have an abelian Sylow $p$-subgroup using an invariant that is naturally suggested by the construction.

{\bf Notation.} Our notation is standard. All groups in this paper are finite. Let $G$ be a group. For each element $x \in G$, we denote by $x^G=\{x^g \mid g \in G\}$ its $G$-conjugacy class. The collection of conjugacy classes is denoted by $\mr{Cl}(G)$, and their number is $k(G)$. We let $x_i \in G$ for $i=1,\dots,k(G)$ be representatives for the $G$-conjugacy classes. The collection of irreducible complex characters of $G$ is denoted $\Irr{G}$. It is known that $k(G)=|\Irr{G}|$. Let $\chi_1$ be the trivial character, which we often denote as $1_G$. The irreducible characters are orthonormal with respect to the inner product $[\chi_i, \chi_j] = (1/|G|) \sum_{g \in G} \chi_i(g) \q{\chi_j(g)}$. Let $\mr{cf}(G)$ be the vector space of complex-valued class functions on $G$. Now let $p$ be a prime. The collection of $G$-conjugacy classes consisting of elements of order coprime to~$p$, also called $p'$-elements or $p$-regular elements, is denoted by $\mr{Cl}_{p'}(G)$. The number of $p$-regular conjugacy classes of $G$ is $k_{p'}(G)=|\mr{Cl}_{p'}(G)|$. Let $\mr{IBr}_p(G)$ be the irreducible $p$-Brauer characters of~$G$. Now let $\pi$ be a collection of primes. Let $G_\pi \subseteq G$ denote the set of $\pi$-elements of~$G$, let $\mr{Cl}_\pi(G)$ denote the collection of $G$-conjugacy classes of $\pi$-elements, let $|G|_\pi$ denote the $\pi$-part of the integer $|G|$, and let $k_{\pi}(G)$ be the number of $\pi$-conjugacy classes of $G$.

\section{Examples}\label{s:setup}

In this section, we give some examples from the literature that can be formulated using the class function language described in the introduction. In our first collection of examples, we choose a basis for the space $\mr{cf}(G)$ of complex class functions on a finite group $G$ consisting of the irreducible ordinary characters $\Irr{G}$. 

\begin{example}(Element orders.)\label{ex:moreto} 
The invariant $I_G = \sum_{g \in G} |g|$ summing the element orders has been much studied from a group theoretical point of view; see~\cite{amiri09, amiri26, herzog18, herzog22, La23,La20,T14,T20} and the references therein. Moret{\'o} studied the class function defined by $\mbf{I}_G(g) = |G| |g|$ for all $g \in G$ in \cite{Mor23}, noting that $I_G=[\mbf{I}_G,1_G]$. Moret{\'o} proved, among other things, that~$\mbf{I}_G$ is a generalized character.

The closely related invariant $I_G = (1/|G|) \sum_{g \in G} |g|$ calculating the average element order has also been much studied; see~\cite{herzog22,AvOrd1, AvOrd2, La23ii}. The class function defined by $\mbf{I}_G(g) = |g|$ satisfies $I_G = [\mbf{I}_G,1_G]$. Moret{\'o} showed that the smallest positive integer $m$ such that $m\mbf{I}_G$ is a generalized character is $m=|G|$.
\end{example}

\begin{example}(Conjugacy classes.)\label{ex:conj}
If $I_G=k(G)$, the number of conjugacy classes of $G$, then the class function defined by $\mbf{I}_G(g)=|\Cen{G}{g}|$ for all $g \in G$ satisfies $I_G=[\mbf{I}_G,1_G]$. This class function is the character, often denoted $\Pi_G$, of the so-called conjugating representation, which is the linear representation obtained by letting $G$ act by conjugation on its group algebra $\C G$. The study of the constituents of the conjugating representation has a long history; see~\cite{heide13,heide06} and the references therein.

The closely related invariant $I_G = k(G)/|G|$ then arises via the class function $\mbf{I}_G(g)=|\Cen{G}{g}|/|G|$. It has been pointed out by many authors that the invariant $d(G) = k(G)/|G|$ is equal to the probability that two randomly chosen elements of $G$ commute; see~\cite{ET68, G73, J69, J77}. Many studies have shown that this invariant, often called the commuting probability or commutativity degree, encodes important structural information about the group; see~\cite{E15,G06,G73,L95} and the references therein. For example, Gustafson proved in~\cite{G73} that if $d(G)>5/8$, then $G$ is abelian.
\end{example}

\begin{example}(Permutation characters.)\label{ex:gen1}
The last example can be put in a more general context. Note that if $G$ acts on a finite set $\Omega$ and $\mbf{I}_G$ is the permutation character, then $\mbf{I}_G(g) = |\mr{Fix}_\Omega(g)|$, and so by Burnside's counting theorem
\begin{align*}
    I_G = \{\text{\# of $G$-orbits on $\Omega$}\} = \frac{1}{|G|} \sum_{g \in G} |\mr{Fix}_\Omega(g)| = [\mbf{I}_G,1_G].
\end{align*}
Example~\ref{ex:conj} arises from this construction with $G$ acting on $\Omega=G$ by conjugation. 

Now let $A$ be any finitely-generated group, and let $G$ act on $\Omega=\mr{Hom}(A,G)$ by post-composition with conjugation, i.e. \mbox{$g \cdot \phi = g \phi(-) g^{-1}$}. Then $|\mr{Fix}_{\Omega}(g)|=|\mr{Hom}(A,\Cen{G}{g})|$, and so the class function defined by $\mbf{I}_G(g) = |\mr{Hom}(A,\Cen{G}{g})|$ for all $g \in G$ has the associated invariant
\begin{align*}
    I_G = [\mbf{I}_G,1_G] = \{\text{\# of $G$-orbits on $\mr{Hom}(A,G)$}\}.
\end{align*}
This gives rise to more examples:
\begin{enumerate}[nolistsep,label=(\arabic*)]
    \item If $A=\Z$, then $\mr{Hom}(A,G) \cong G$, $\mr{Hom}(A,\Cen{G}{g}) \cong \Cen{G}{g}$, and we recover  $I_G=k(G)$ and $\mbf{I}_G(g)=|\Cen{G}{g}|$ for all $g \in G$ from Example~\ref{ex:conj}.
    \item If $A=F_n$ (the free group on $n$ generators), then $\mr{Hom}(A,G) \cong G^n$, $\mbf{I}_G(g)=|\Cen{G}{g}|^n$ and $I_G$ is the number of conjugacy classes of $n$-tuples under simultaneous conjugation.
    \item If $A=\Z^n$, then $\mr{Hom}(A,G)$ is the set of pairwise-commuting $n$-tuples in $G$, and $G$ acts on these tuples by simultaneous conjugation. So $I_G$ is the number of conjugacy classes of $n$-tuples with pairwise-commuting elements. 
\end{enumerate}
The invariant in (3) was considered by Levit and Shwartz in their investigations of the so-called ``higher commutativity''~\cite{LevitShwartz2026a, LevitShwartz2026b}. Commuting $n$-tuples were also considered by Erd\H{o}s and Straus~\cite{ES76}.
\end{example}

\begin{example}(Adams operations.) \label{ex:frob-schur}
Given an irreducible character $\chi \in \Irr{G}$ of a finite group $G$, the class function $\mbf{I}_{\chi,G}=\chi^{(2)}$ defined by $\chi^{(2)}(g) = \chi(g^2)$ for all $g \in G$ gives rise to the so-called Frobenius--Schur indicator of $\chi$, $I_{\chi,G} =[\mbf{I}_{\chi,G},1_G] = (1/|G|) \sum_{g\in G} \chi(g^2)$. See \cite[Chapter~4]{I94} for more information about this classical topic.

Recently, Boltje, Kleshchev, Navarro and Tiep proved a reduction of the Feit Conjecture to a local problem on finite simple groups~\cite{B25}. The authors also formulated a stronger form of the conjecture \cite[Conjecture~E]{B25} which is connected to an integer-valued invariant $S(G,\chi,n)$ studied by Boltje and Navarro in \cite{B25ii}, where $G$ is a finite group, $\chi \in \Irr{G}$ and $n$ is a positive integer. One way to construct this invariant is as the multiplicity of the trivial character in a generalized character constructed from a particular integral linear combination of Adams operations $\Psi^k(\chi)$ of $\chi$, where $\Psi^k(\chi)(g)=\chi(g^k)$ for all $g \in G$~\cite[Theorem~B]{B25ii}. Note that $\Psi^2(\chi)=\chi^{(2)}$ from the last paragraph.
\end{example}

\begin{example}(An example of Frobenius.) 
Frobenius considered the following example, which is described in~\cite[V.19.13--V.19.14]{Hu67}. Let $K$ be a conjugacy class of a finite group~$G$, let $n$ be a divisor of~$|G|$, and let $t$ be the number of elements $g \in G$ such that $g^n \in K$. The rational number $I_G=t/n$ is an invariant of the group $G$. Now, if we define the class function $\mbf{I}_G$ on~$G$ by $\mbf{I}_G(g) = |G|/n$ if $g^n \in K$ and $0$ otherwise, then we have
\begin{align*}
    [\mbf{I}_G,1_G] = \frac{1}{|G|} \sum_{g \in G} \mbf{I}_G(g) = \frac{t}{n} = I_G.
\end{align*}
In fact, the class function perspective gives interesting information about the invariant~$I_G$ in this case: Our class function $\mbf{I}_G$ can be expressed as a linear combination of irreducible characters with coefficients in $\C$. However, it is shown in \cite[Theorem~V.19.13]{Hu67} that the coefficients are actually algebraic integers lying in $\Z[\epsilon]$, where $\epsilon$ is a primitive $|G|$-th root of unity. In particular, the coefficient $[\mbf{I}_G,1_G]=I_G$ is an algebraic integer. So, $I_G$ is a rational number and an algebraic integer, which implies that $I_G$ is an integer. This is not at all obvious from the definition.
\end{example}

The next example will be one of the main concerns of this paper. We will consider it in detail in Sections~\ref{s:props} and~\ref{s:Q3}.

\begin{example}(``Local'' conjugating character.)\label{ex:pi}
There has been much interest in a so-called ``$\pi$-local'' version of the invariant $d(G)$ from Example~\ref{ex:conj}; the invariant $d_{\pi}(G) = k_{\pi}(G)/|G|_{\pi}$ seems to capture much information about the $\pi$-structure of $G$~\cite{M14,M23,TV20}. In particular, if $\pi=p'$ for some prime $p$, then $d_{p'}(G)$ captures information about the modular representations of $G$ over a field of characteristic $p$~\cite{S24}. Sangroniz characterized when $d_{p'}(G)=d_{p'}(H)$ representation-theoretically for $H$ a subgroup of a $p$-solvable group~$G$ in \cite[Theorem~3.3]{S08}. 

Now we define the class function $\Pi_{\pi,G}$ by
\begin{align*}
    \Pi_{\pi,G}(g) = \begin{cases}
        |\mathbf{C}_G(g)| & \text{if $g \in G$ is a $\pi$-element} \\
        0 & \text{otherwise}.
    \end{cases}
\end{align*}
We also define the scaled class function $\Delta_{\pi,G} = \Pi_{\pi,G}/ |G|_\pi$. A straightforward calculation shows that $[\Pi_{\pi,G},1_G]=k_{\pi}(G)$ and $[\Delta_{\pi,G},1_G] = d_\pi(G)$.

We also mention Robinson's closely related generalized character defined by $\Psi_{1,p,G}(g)=|\Cen{G}{g}_p|$, the number of $p$-elements in $\Cen{G}{g}$, if $g \in G$ is $p$-regular and $0$ otherwise~\cite{Ro25}. Robinson also defines a $\pi$-variant in \cite[Section~8]{Ro25}.
\end{example}

\begin{example}(Local class functions.)\label{ex:X}
The last example can also be put in a more general context. If $X \subseteq G$ is any subset closed under conjugation by $G$, then we may define
\begin{align*}
    \Pi_{X,G}(g) = \begin{cases}
        |\mathbf{C}_G(g)| & \text{if $g \in X$} \\
        0 & \text{otherwise}.
    \end{cases}
\end{align*}
Then $[\Pi_{X,G},1_G]$ is simply the number of conjugacy classes in $X$. If $X=G_\pi$, then we recover $\Pi_{\pi,G}$ from Example~\ref{ex:pi}. If $X$ is the set of all real, resp. rational, elements of $G$, then $[\Pi_{X,G},1_G]$ is the number of real, resp. rational, classes. We will consider this class function in more detail in Section~\ref{s:Q3}.
\end{example}

To conclude this section, we consider a basis for the space of class functions consisting of characteristic functions, relaxing the condition that the basis vectors be normalized to~$1$: The collection of class functions $\wh{x_i}$ for $i=1,\dots,k(G)$ defined by $\wh{x_i}(g)=1$ if $g \in x_i^G$ and $0$ otherwise is clearly a basis of $\mr{cf}(G)$. It is an orthogonal but not orthonormal basis with respect to the usual inner product, however, since $[\wh{x_i}, \wh{x_j}] =  \delta_{ij} |x_i^G|/|G|$. Let $\wh{1}_G$ denote the characteristic function of the $G$-conjugacy class consisting of the identity. Note that $[\wh{1}_G, \wh{1}_G]=1/|G|$. We now reformulate a result of Sangroniz in the language of class functions using this basis.

\begin{example}($p$-regular classes.)\label{ex:sang}
Consider the group invariant $I_G =k_{p'}(G)/|G|$. Sangroniz characterized representation-theoretically when $I_G=I_H$ for $H \le G$ in \cite[Theorem~2.1]{S08}. Now define the class function $\mbf{I}_G = k_{p'}(G) \; \wh{1}_G$ for a finite group $G$, so that $I_G=[\mbf{I}_G,\wh{1}_G]$. We can recast Sangroniz' theorem in the following form using class functions:

\begin{theorem}{\cite[Theorem~2.1]{S08}}\label{p:Sang2}
Let $H \le G$, and let $p$ be a prime. The following are equivalent:
\begin{enumerate}[nolistsep,label=(\roman*)]
    \item $\mbf{I}_G = (\mbf{I}_H)^G$.
    \item Restriction is a surjection $\mr{IBr}_p(G) \rightarrow \mr{IBr}_p(H)$ and $[G:H]$ is not divisible by $p$.
\end{enumerate}
\end{theorem}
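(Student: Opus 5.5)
The plan is to reduce statement (i) to a purely numerical equality and then recognize it as exactly the condition characterized by Sangroniz.

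\emph{Step 1: compute the induced class function.} Since $\mbf{I}_H = k_{p'}(H)\,\wh{1}_H$ is supported on the identity, the induction formula
\begin{align*}
(\mbf{I}_H)^G(g) = \frac{1}{|H|}\sum_{x\in G}\mbf{I}_H^\circ(xgx^{-1})
\end{align*}
vanishes unless $g=1$. Here $\mbf{I}_H^\circ$ denotes the extension of $\mbf{I}_H$ by zero off $H$. At $g=1$ the formula gives $[G:H]\,k_{p'}(H)$. Hence
\begin{align*}
(\mbf{I}_H)^G = [G:H]\,k_{p'}(H)\,\wh{1}_G .
\end{align*}
Both sides of (i) are therefore multiples of $\wh{1}_G$. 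So (i) holds if and only if $k_{p'}(G) = [G:H]\,k_{p'}(H)$, that is, if and only if
\begin{align*}
k_{p'}(G)/|G| = k_{p'}(H)/|H| .
\end{align*}
This is precisely the equality $I_G = I_H$.

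\emph{Step 2: identify with Sangroniz's criterion.} Sangroniz's theorem \cite[Theorem~2.1]{S08} says exactly that $k_{p'}(G)/|G| = k_{p'}(H)/|H|$ holds if and only if restriction $\mr{IBr}_p(G)\to\mr{IBr}_p(H)$ is surjective and $p\nmid[G:H]$. Combined with Step 1, this gives (i)$\Leftrightarrow$(ii). In the paper this step is simply a citation.

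If one wants a self-contained argument, I would reproduce Sangroniz's strategy as follows.
\begin{enumerate}[nolistsep,label=(\alph*)]
    \item Use $k_{p'}(G) = |\mr{IBr}_p(G)|$.
    \item For $\psi\in\mr{IBr}_p(H)$, write the induced projective indecomposable character as $\Phi_\psi^G = \sum_{\varphi} a_{\psi\varphi}\Phi_\varphi$, where $a_{\psi\varphi}$ is the multiplicity of $\psi$ in $\varphi_H$ (Brauer Frobenius reciprocity).
    \item Establish the general inequality $k_{p'}(G)/|G| \le k_{p'}(H)/|H|$ by a counting or degree comparison. I would compare $|G| = \sum_\varphi \varphi(1)\Phi_\varphi(1)$ with its analogue for $H$, using that $|G|_p$ divides $\Phi_\varphi(1)$.
    \item Analyze when equality holds. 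It should force every $\psi$ to occur in some $\varphi_H$, which is surjectivity of restriction, and force the $p$-parts of the indices to match, which is $p\nmid[G:H]$.
\end{enumerate}

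The main obstacle would be (c)–(d): extracting the exact equality condition from the degree comparison, particularly the role of $p\nmid[G:H]$ through the $p$-divisibility of the $\Phi_\varphi(1)$. For the statement as formulated here, however, the only genuinely new content is the induction computation in Step 1, which is routine.
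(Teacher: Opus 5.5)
Your proposal is correct and follows the paper's proof: both sides of (i) vanish off the identity, so (i) reduces to $k_{p'}(G)=[G:H]\,k_{p'}(H)$, i.e.\ $I_G=I_H$, and that equality is then fed into \cite[Theorem~2.1]{S08}. The one step the paper makes explicit and you fold into the citation is this: Sangroniz's criterion, as the paper quotes it, says ``every $\varphi\in\mr{IBr}_p(G)$ restricts irreducibly to $H$'', and surjectivity onto $\mr{IBr}_p(H)$ needs the extra fact from \cite[Corollary~8.3]{N98} that every irreducible Brauer character of $H$ lies under one of $G$.
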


\noindent\emph{Proof.}
(i) $\Rightarrow$ (ii): If $\mbf{I}_G = (\mbf{I}_H)^G$, then $k_{p'}(G)=\mbf{I}_G(1)=(\mbf{I}_H)^G(1)=[G:H] k_{p'}(H)$; that is, $I_G = I_H$. By \cite[Theorem~2.1]{S08}, this means that $[G:H]$ is not divisible by $p$ and every irreducible $p$-Brauer character of $G$ restricts irreducibly to $H$, so restriction defines a map $\mr{IBr}_p(G) \rightarrow \mr{IBr}_p(H)$. Since every irreducible Brauer character of $H$ lies under an irreducible Brauer character of $G$ by \cite[Corollary 8.3]{N98}, we have the desired surjection.

(ii) $\Rightarrow$ (i): Since all Brauer characters restrict irreducibly and $p \nmid [G:H]$, we have $\mbf{I}_G(1) = k_{p'}(G) = [G:H]k_{p'}(H) = (\mbf{I}_H)^G(1)$ by \cite[Theorem~2.1]{S08}. This completely characterizes the equality $\mbf{I}_G = (\mbf{I}_H)^G$ since both sides vanish off the identity. \qed
\end{example}

\section{The local conjugating character}\label{s:props}

In this section, we consider Questions (1) and (2) of the introduction for the ``$\pi$-local'' conjugating character $\Pi_{\pi,G}$ and $\Delta_{\pi,G}$ from Example~\ref{ex:pi}. We consider Question~(3) in the next section. Note that $\Pi_{\pi,G}$ interpolates between the character $\Pi_G$ of the conjugating representation and the character $\rho_G$ of the regular representation: If $G$ is a $\pi$-group then $\Pi_{\pi,G}=\Pi_G$, while if $G$ is a $\pi'$-group then $\Pi_{\pi,G}=\rho_G$.

We now turn to Question~(1) of the introduction. Note that the rational-valued class function $\Delta_{\pi,G}$ is rarely a generalized character since $[\Delta_{\pi,G},1]=d_{\pi}(G)$ and \mbox{$0 < d_\pi(G) \le 1$} by \cite[Lemma~3.5]{M21}. Our next theorem characterizes exactly when $\Delta_{\pi,G}$ is a (generalized) character.

\begin{theorem}\label{prop2} 
Let $G$ be a finite group, and let $\pi$ be a collection of primes. Then the following are equivalent:
\begin{enumerate}[nolistsep,label=(\roman*)]
    \item $\Delta_{\pi,G}$ is a character.
    \item $\Delta_{\pi,G}$ is a generalized character.
    \item $d_\pi(G)=1$.
    \item $G$ has a normal $\pi$-complement and an abelian Hall $\pi$-subgroup.
\end{enumerate}
\end{theorem}

\begin{proof}
The implication (i) $\Rightarrow$ (ii) follows by definition. For (ii) $\Rightarrow$ (iii), simply note that $[\Delta_{\pi,G},1_G] \in \mathbb{Z}$ and $0 < d_{\pi}(G) \le 1$ imply that $d_\pi(G)=1$, which is (iii). We have (iii) $\Leftrightarrow$ (iv) by \cite[Proposition 7 b)]{M14}. Finally, consider (iv) $\Rightarrow$ (i). We may write $G = H_{\pi'} \rtimes A_\pi$, where $H_{\pi'}$ is a normal Hall $\pi'$-subgroup and $A_{\pi}$ is an abelian Hall $\pi$-subgroup. Since $G$ is $\pi$-solvable, all Hall $\pi$-subgroups of $G$ are conjugate and every $\pi$-element is contained in a Hall $\pi$-subgroup by a theorem of Hall. Hence, every $\pi$-conjugacy class of $G$ has a representative in $A_\pi$. In fact, every element of $A_\pi$ is the representative of a distinct
$\pi$-conjugacy class of $G$: Otherwise, there exist distinct $a, b \in A_\pi$
with $a = b^x$ for some $x \in G$. Write $x = hc$ with $h \in H_{\pi'}$ and
$c \in A_\pi$. Since $H_{\pi'} \nrm G$, we have
$h' := c^{-1} h c \in H_{\pi'}$, and $x = ch'$. As $A_\pi$ is abelian, we
have $b^c = b$, and so $a=b^x = b^{ch'}=b^{h'}$. But then $b^{-1}a = [b, h'] \in A_\pi \cap H_{\pi'} = 1$, and so $a = b$, a contradiction. Now we calculate the multiplicity of the character $\chi \in \mathrm{Irr}(G)$ in $\Delta_{\pi,G}$ to be
\begin{align*}
    [\Delta_{\pi,G},\chi] 
    &= \frac{1}{|G|} \sum_{g \in G_\pi} \frac{|\Cen{G}{g}|\q{\chi(g)}}{|G|_\pi} 
    = \frac{1}{|G|_\pi} \sum_{g \in G_\pi} \frac{\q{\chi(g)}}{|g^G|}\\
    &= \frac{1}{|A_\pi|} \sum_{a \in A_\pi} |a^G| \frac{\q{\chi(a)}}{|a^G|}
    = \frac{1}{|A_\pi|} \sum_{a \in A_{\pi}} \q{\chi(a)} = [1_{A_\pi},\chi|_{A_\pi}],
\end{align*}
which is a nonnegative integer since it is an inner product of characters of $A_\pi$. So $\Delta_{\pi,G}$ is a character.
\end{proof}

In contrast to $\Delta_{\pi,G}$, the class function $\Pi_{\pi,G}$ is always a generalized character.

\begin{theorem}\label{prop3} 
Let $G$ be a finite group, and let $\pi$ be a collection of primes. Then $\Pi_{\pi,G}$ is a generalized character of $G$.
\end{theorem}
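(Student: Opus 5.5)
Write $\Pi_{\pi,G} = \Pi_G \cdot \varepsilon_\pi$, where $\Pi_G(g)=|\Cen{G}{g}|$ is the conjugating character of Example~\ref{ex:conj} and $\varepsilon_\pi$ is the characteristic function of $G_\pi$. The plan is to show that $\Pi_{\pi,G}$ is a $\Z$-linear combination of characters induced from elementary subgroups. Equivalently, we verify Brauer's characterization of generalized characters. A class function $\theta$ is a generalized character if and only if $\theta|_E$ is a generalized character of $E$ for every elementary subgroup $E$. A more convenient form requires $\theta$ to be $\Z$-valued and, for each prime $q$, $[\theta|_E,\lambda]\in\Z$ for all $q$-elementary $E$ and linear $\lambda$.

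Rather than restrict, I would try a direct decomposition. Use
\[
\Pi_{\pi,G} = \sum_{x\in G_\pi} \wh{x}\,|\Cen{G}{x}| \;=\; \sum_{x^G\subseteq G_\pi} |G|\,\wh{x}.
\]
The inner product with $\chi\in\Irr{G}$ is then
\[
[\Pi_{\pi,G},\chi]=\sum_{x^G\in \mr{Cl}_\pi(G)} \q{\chi(x)}=\frac{1}{|G|}\sum_{g\in G}\ \sum_{x\in G_\pi\cap\Cen{G}{g}}\q{\chi(x)}.
\]
To check that this is an integer, reinterpret the sum as counting over commuting pairs. We have
\[
[\Pi_{\pi,G},\chi] = \frac{1}{|G|}\sum_{\substack{(g,x)\\ gx=xg,\ x\in G_\pi}}\q{\chi(x)} = \frac{1}{|G|}\sum_{g\in G}\ \sum_{x\in\Cen{G}{g}_\pi}\q{\chi(x)}.
\]
Group the inner sum by the subgroup $C=\Cen{G}{g}$. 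The key input is a classical fact (Frobenius' theorem and its character-theoretic version, cf.\ \cite[Chapter~8]{I94}): for any group $C$, the class function $\varepsilon_{\pi,C}\cdot|C|_{\pi'}$ is a generalized character. More precisely, the function $\gamma_C$ defined by $\gamma_C(y)=|C|_{\pi'}$ if $y\in C_{\pi'}$ and $0$ otherwise is a generalized character of $C$. Dually, we need that $\sum_{x\in C_\pi}\chi(x)$ is divisible by $|C|_{\pi'}$. This is the generalization of Frobenius' theorem that $\sum_{x^n=1}\chi(x)\equiv 0 \pmod{\gcd(n,|C|)}$, applied with $n=|C|_\pi$.

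The main obstacle is that dividing by $|G|$ after summing over $g$ does not obviously give an integer from these local divisibilities. To handle it, I would instead symmetrize in $x$ first:
\[
[\Pi_{\pi,G},\chi]=\sum_{x^G\subseteq G_\pi}\q{\chi(x)}.
\]
Then write $\Pi_{\pi,G}$ as $\sum_{H}$ of induced characters. Take $\Pi_{\pi,G}=\sum$ over $\pi$-subgroups? A cleaner route is to exploit the known fact that $\Pi_G=\sum_{\chi}\chi\q{\chi}$ and to multiply by $\varepsilon_\pi$. It would suffice that $\chi\q{\chi}\varepsilon_\pi$ times a suitable factor is a generalized character, but this is false in general. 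So I would fall back on Brauer's criterion.

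For $E=P\times Q$ elementary, with $Q$ a $q$-group and $P$ cyclic of order prime to $q$, the restriction is
\[
\Pi_{\pi,G}|_E(y)=|\Cen{G}{y}|\,\varepsilon_\pi(y).
\]
Write $y=y_\pi y_{\pi'}$ with commuting parts in the abelian-by-nilpotent group $E$. Then $\varepsilon_\pi|_E$ is the inflation-like product $\prod_r\varepsilon_{r\text{-trivial}}$ over the Sylow factors of the nilpotent group $E$. The function $|\Cen{G}{y}|$ restricted to $E$ is the permutation character of $E$ acting on $G$ by conjugation, so it decomposes into characters $(1_{\Cen{E}{z}})^E$ over the orbits. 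Hence the restriction $\Pi_{\pi,G}|_E$ is a sum of the products $(1_{\Cen{E}{z}})^E\cdot\varepsilon_{\pi,E}$. Each such product equals $(\varepsilon_{\pi,\Cen{E}{z}})^E$. It therefore remains to show that, for a nilpotent group $N=\Cen{E}{z}$, $\varepsilon_{\pi,N}$ times $|N|_{\pi'}$ is a generalized character. This holds because $N=N_\pi\times N_{\pi'}$, and the function equals $1_{N_\pi}\times\rho_{N_{\pi'}}$. The remaining difficulty is the leftover factor $|N|_{\pi'}$. I expect to absorb it by summing over the $E$-orbit of $z$ together with the $N_{\pi'}$-translates $zn$. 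This orbit-counting step, which shows that the multiplicities $|N|_{\pi'}$ appear naturally, is the crux of the argument.
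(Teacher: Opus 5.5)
There is a genuine gap. None of your routes reaches a proof. Each is either abandoned or stops at the step you yourself call ``the crux'', and that step is where all of the integrality sits.

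\textbf{The Brauer route stops short.} The reduction you state is not the right one. Write $N=\Cen{E}{z}=N_\pi\times N_{\pi'}$. Then each summand is $(\varepsilon_{\pi,N})^E=|N_{\pi'}|^{-1}(1_{N_\pi})^{E_\pi}\times\rho_{E_{\pi'}}$, and its inner product with $1_E$ is $1/|N_{\pi'}|$. So the summands are not generalized characters whenever $N_{\pi'}\neq 1$. Knowing that $|N|_{\pi'}\varepsilon_{\pi,N}$ is a generalized character (which is trivial) only tells you that $|N_{\pi'}|$ times each summand is one, which is not what is needed. Integrality appears only after summing over all $z$. Your proposed mechanism, translating $z$ by elements of $\Cen{E}{z}_{\pi'}$, does not commute with $E$-conjugation, so it is unclear that it does anything.

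If you want to complete this route, here is one way:
\begin{enumerate}[nolistsep,label=(\alph*)]
\item For an elementary (hence nilpotent) $E=E_\pi\times E_{\pi'}$, left multiplication by $E_{\pi'}$ is a free action on $G$ that commutes with conjugation by $E_\pi$.
\item By freeness and coprimality, every $(E_\pi\times E_{\pi'})$-orbit on $G$ has the form $E_\pi/S\times E_{\pi'}$ with $S\le E_\pi$.
\item Hence $\Pi_G|_{E_\pi}=|E_{\pi'}|\,\psi$ for a permutation character $\psi$ of $E_\pi$.
\item Therefore $\Pi_{\pi,G}|_E=\psi\times\rho_{E_{\pi'}}$ is a character of $E$, and Brauer's characterization theorem finishes the proof.
\end{enumerate}

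\textbf{None of this machinery is needed.} The ``main obstacle'' you identify is illusory. You already derived $[\Pi_{\pi,G},\chi]=\sum_{x^G\in\mr{Cl}_\pi(G)}\q{\chi(x)}$. This is a finite sum of algebraic integers with no denominator, so only rationality remains. The missing idea is Galois invariance, and this is exactly the paper's proof:
\begin{enumerate}[nolistsep,label=(\alph*)]
\item Let $\sigma$ be an automorphism of $\mathbb{Q}_{|G|}$ sending a primitive $|G|$-th root of unity $\epsilon$ to $\epsilon^k$. Then $\sigma(\chi(x))=\chi(x^k)$.
\item Since $x^k$ has the same order as $x$, the map $x^G\mapsto(x^k)^G$ permutes $\mr{Cl}_\pi(G)$.
\item So the sum is Galois-fixed, hence rational, hence an integer.
\end{enumerate}

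\textbf{Two smaller slips.}
\begin{enumerate}[nolistsep,label=(\alph*)]
\item Your first display is false. One has $\sum_{x\in G_\pi}|\Cen{G}{x}|\,\wh{x}=|G|\varepsilon_\pi$, whereas the correct expression is $\Pi_{\pi,G}=\sum_{x^G\subseteq G_\pi}|\Cen{G}{x}|\,\wh{x}$.
\item The function $\gamma_C$ as you define it is not a generalized character in general. For $|C|=p$ and $\pi=\{p\}$ it equals $\rho_C/p$.
\end{enumerate}
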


\begin{proof}
For every $\chi \in \mathrm{Irr}(G)$, the multiplicity
\begin{align*}
    [\Pi_{\pi,G},\chi] = \sum_{g \in G_\pi} \frac{|\Cen{G}{g}|\q{\chi(g)}}{|G|} = \sum_{g \in G_\pi} \frac{\q{\chi(g)}}{|g^G|} = \sum_{g^G \in \mr{Cl}_\pi(G)} \q{\chi(g)}
\end{align*}
is an algebraic integer. Since the Galois group of the cyclotomic field $\mathbb{Q}_{|G|}$ permutes the conjugacy classes of $G$ (preserving the orders of elements), the multiplicity $[\Pi_{\pi,G},\chi]$ is fixed by every such Galois automorphism, so it is rational. Hence, $[\Pi_{\pi,G},\chi]$ is an integer and $\Pi_{\pi,G}$ is a generalized character.
\end{proof}

Since the class function $\Pi_{\pi,G}$ is always a generalized character, it is then natural to ask when it is a character. In our next theorem, we show that $\Pi_{\pi,G}$ is always a character in some important special cases. These results were proved independently by Robinson in~\cite[Remark~5.1 and Theorem~8.1]{Ro25}; we prove the $\pi=p'$ case using different methods, but our proof of the $\pi$-separable case is very similar, so we just sketch it.

\begin{theorem}\label{prop4} 
Let $\pi$ be a collection of primes. If 
\begin{enumerate}[nolistsep,label=(\roman*)]
    \item $\pi = p'$ for some prime $p$ and $G$ is an arbitrary finite group or
    \item $\pi$ is arbitrary and $G$ is a $\pi$-separable finite group,
\end{enumerate}
then $\Pi_{\pi,G}$ is a character of $G$. 
\end{theorem}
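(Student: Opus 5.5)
The idea is to write $\Pi_{\pi,G}$ as a sum of products of the form ``(a character) $\times$ (a projective-type character that vanishes off $\pi$-elements)''. Each such product is a genuine character, and the orthogonality relations will show that the sum agrees with $\Pi_{\pi,G}$ everywhere.

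\emph{Case (i), $\pi=p'$.} For $\varphi\in\mathrm{IBr}_p(G)$, let $\Phi_\varphi$ be the character of the projective indecomposable module attached to $\varphi$. Recall that $\Phi_\varphi$ is an ordinary character of $G$ vanishing on $p$-singular elements. The second orthogonality relation for Brauer characters states that, for $p$-regular $x,y$,
\begin{align*}
\sum_{\varphi\in \mathrm{IBr}_p(G)} \q{\varphi(x)}\,\Phi_\varphi(y)=\begin{cases}|\Cen{G}{x}| & \text{if } y\in x^G,\\ 0 & \text{otherwise.}\end{cases}
\end{align*}
Taking $x=y$ gives $\sum_\varphi \q{\varphi}(y)\Phi_\varphi(y)=|\Cen{G}{y}|$ for every $p$-regular $y$. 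If we extend $\q{\varphi}\Phi_\varphi$ by $0$ on $p$-singular elements, the sum also vanishes where $\Pi_{p',G}$ does. Hence
\begin{align*}
\Pi_{p',G}=\sum_{\varphi\in\mathrm{IBr}_p(G)}\q{\varphi}\,\Phi_\varphi
\end{align*}
as class functions.

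It then remains to see that each summand is a character. Here $\q{\varphi}$ is the Brauer character of the dual module $V^*$, and $\Phi_\varphi$ is afforded by a projective $\mathcal{O}G$-lattice $P$. The tensor product $V^*\otimes P$, in its lifted form as a projective lattice, is again projective. Its ordinary character restricts to $\q{\varphi}\Phi_\varphi$ on $p$-regular elements and vanishes on $p$-singular elements, since it is a projective character. Thus $\q{\varphi}\Phi_\varphi$ is a nonnegative integer combination of the $\Phi_\psi$, and so is $\Pi_{p',G}$. I expect this identification of ``Brauer character times projective character'' with an honest projective character to be the only point requiring care; it is standard (e.g.\ \cite{N98}).

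\emph{Case (ii), $G$ $\pi$-separable.} I would run the same argument inside Isaacs' $\pi$-theory. The irreducible $\pi$-partial characters $\varphi\in I_\pi(G)$ replace $\mathrm{IBr}_p(G)$. Each has a canonical lift $\chi_\varphi\in B_\pi(G)\subseteq\Irr{G}$ whose restriction to $G_\pi$ is $\varphi$. The corresponding characters $\Phi_\varphi=\sum_\chi d_{\chi\varphi}\chi$ are genuine characters vanishing off $G_\pi$, and they satisfy the analogous orthogonality relation
\begin{align*}
\sum_{\varphi\in I_\pi(G)}\q{\varphi(x)}\Phi_\varphi(y)=|\Cen{G}{x}|\,\delta_{x^G,y^G}
\end{align*}
for $x,y\in G_\pi$.

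Using the lifts, set $\Theta=\sum_{\varphi}\q{\chi_\varphi}\,\Phi_\varphi$. This is a product of ordinary characters summed over $\varphi$, hence a genuine character. On $\pi$-elements $y$ it equals $\sum_\varphi\q{\varphi(y)}\Phi_\varphi(y)=|\Cen{G}{y}|$. Off $G_\pi$ it vanishes because every $\Phi_\varphi$ does. Therefore $\Theta=\Pi_{\pi,G}$, and $\Pi_{\pi,G}$ is a character.

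The main obstacle in (ii) is simply quoting the correct facts from Isaacs' theory: the existence of $B_\pi$-lifts, the vanishing of $\Phi_\varphi$ off $\pi$-elements, and the orthogonality relation. Once these are in place, the argument is formally identical to case (i), and the lift even removes the need for the projective-module argument used there.
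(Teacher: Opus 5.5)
Your proof is correct, but it takes a different route from the paper's.

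\emph{The paper's route.} The paper expands $\Pi_{\pi,G}$ in the basis $\{\Phi_\varphi\}$ of class functions vanishing off $G_\pi$. It identifies the coefficient of $\Phi_\varphi$ as the row sum $\sum_{g^G\in\mr{Cl}_\pi(G)}\varphi(g)$ of the Brauer (resp.\ $\pi$-partial) character table. Integrality comes from Theorem~\ref{prop3} together with Navarro's Corollary~2.17 (resp.\ Isaacs' Problem~3.4). Nonnegativity is obtained by quoting the result of Chillag et al.\ that these row sums are nonnegative reals.

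\emph{Your route.} You use second orthogonality to write the explicit decomposition $\Pi_{\pi,G}=\sum_\varphi \q{\varphi}\,\Phi_\varphi$. This is the exact analogue of $\Pi_G=\sum_\chi \chi\q{\chi}$ and of the paper's own proof of Theorem~\ref{t:real}. You then check that each summand is a character.
\begin{enumerate}[nolistsep,label=(\alph*)]
    \item The orthogonality relation you need is immediate in both cases: for $x\in G_\pi$, the definition of decomposition numbers gives $\sum_\varphi \q{\varphi(x)}\Phi_\varphi(y)=\sum_{\chi\in\Irr{G}}\q{\chi(x)}\chi(y)$.
    \item In (ii) you only need some ordinary lift of each $\varphi\in I_\pi(G)$, not specifically the $B_\pi$-lift. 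You also need the vanishing of $\Phi_\varphi$ off $G_\pi$, which the paper uses as well.
    \item The step you flag in (i) can be done purely with characters. The coefficient of $\Phi_\psi$ in $\q{\varphi}\Phi_\varphi$ is $[\q{\varphi}\Phi_\varphi,\psi]^0=[\Phi_\varphi,\varphi\psi]^0$. This is the multiplicity of $\varphi$ in the Brauer character $\varphi\psi$, hence a nonnegative integer.
\end{enumerate}

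\emph{What each route buys.} Your argument is self-contained: it needs neither the Chillag row-sum theorem nor Theorem~\ref{prop3}. It proves, rather than assumes, that the Brauer row sums are nonnegative integers, and it interprets them. Pairing both sides with $\psi$ under $[\cdot,\cdot]^0$ shows that the row sum of $\psi\in\mr{IBr}_p(G)$ equals $\sum_{\varphi}[\Phi_\varphi,\varphi\psi]^0$. That is, it is the sum over $\varphi$ of the multiplicity of $\varphi$ as a composition factor of $\varphi\otimes\psi$, which is a small answer to Question~(2). The paper's route directly identifies the multiplicities with row sums, which underlies its subsequent remark; your decomposition recovers that identification too by the same pairing.
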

\begin{proof}
(i) Let $G$ be an arbitrary finite group and $\pi=p'$ for some prime $p$. For every $\chi \in \mathrm{Irr}(G)$, define the decomposition numbers $d_{\chi,\varphi}$ by $\chi^0 = \sum_{\varphi \in \mathrm{IBr}_p(G)} d_{\chi,\varphi} \varphi$, where $\chi^0$ is the restriction of $\chi$ to the $p$-regular elements of $G$. The decomposition numbers are nonnegative integers since $\chi^0$ is a Brauer character of $G$ by \cite[Corollary~2.9]{N98}. Then for every $\varphi \in \mathrm{IBr}_p(G)$ we may define the projective indecomposable character $\Phi_\varphi$ associated to $\varphi$ by $\Phi_\varphi = \sum_{\chi \in \mathrm{Irr}(G)} d_{\chi,\varphi} \chi$. Note that $\Phi_\varphi$ is an ordinary character of $G$ since it is a nonnegative integral linear combination of characters of $G$. Now, by \cite[Theorem 2.13]{N98}, the set $\{\Phi_\varphi \mid \varphi \in \mathrm{IBr}_p(G)\}$ is a basis for those class functions of $G$ which vanish off of $p$-regular elements. Since $\Pi_{p',G}$ vanishes off of $p$-regular elements by definition, we have
\begin{align*}
    \Pi_{p',G} = \sum_{\varphi \in \mathrm{IBr}_p(G)} e_\varphi \Phi_\varphi
\end{align*}
for some $e_\varphi \in \mathbb{C}$. Now, it follows from \cite[Theorem 2.13]{N98} that
\begin{align*}
    \q{e_\varphi} = [\varphi,\Pi_{p',G}]^0 := \frac{1}{|G|} \sum_{g \in G_{p'}} \varphi(g) \q{\Pi_{p',G}(g)}.
\end{align*}
In addition, it follows from \cite[Corollary~2.17]{N98} that \mbox{$e_\varphi = \q{e_\varphi} \in \mathbb{Z}$} since $\Pi_{p',G}$ is a generalized character vanishing off of $p$-regular elements by Theorem~\ref{prop3}. (Indeed, \cite[Corollary~2.17]{N98} shows that $[\varphi,\chi]^0$
is an integer when $\chi$ is any ordinary character vanishing off of $p$-regular elements, and the same proof goes through for generalized characters vanishing off of $p$-regular elements.)

Thus, in order to prove that $\Pi_{p',G}$ is a character, it is enough to show that $e_\varphi$ is nonnegative, since then $\Pi_{p',G}$ is a nonnegative integral sum of characters. Using the definition of $\Pi_{p',G}$, we have
\begin{align*}
    e_\varphi = \frac{1}{|G|} \sum_{g \in G_{p'}} |\mathbf{C}_G(g)| \varphi(g) = \sum_{g \in G_{p'}} \frac{\varphi(g)}{|g^G|} = \sum_{g^G \in \mr{Cl}_{p'}(G)} \varphi(g).
\end{align*}
That is, $e_\varphi$ is the sum of entries in the row of the Brauer character table corresponding to~$\varphi$. Such sums have been investigated in \cite{chen21,chillag98}. In particular, it is remarked in the proof of \cite[Theorem 1.1]{chen21} that \cite[Example 1.8 and Proposition~2.2(b)]{chillag98} proves that the row sums in the Brauer character table are nonnegative real numbers. Hence, $e_\varphi$ is a nonnegative integer and $\Pi_{p',G}$ is a character of $G$. 

(ii) Now, if $\pi$ is an arbitrary collection of primes and $G$ is $\pi$-separable, then an exactly analogous proof goes through. Namely, in place of the Brauer characters we have the so-called $\pi$-partial characters, i.e. the restrictions of ordinary characters of $G$ to the $\pi$-elements of~$G$. The collection $I_\pi(G)$ of irreducible $\pi$-partial characters consists of those $\pi$-partial characters that cannot be written as a sum of two $\pi$-partial characters. Then, in the same way as before, we may define decomposition numbers for each $\pi$-partial character $\chi^0$ with $\chi \in \Irr{G}$, which are necessarily nonnegative integers by the construction of $\pi$-partial characters: The decomposition numbers simply reflect the decomposition of $\chi^0$ into the linearly independent irreducible $\pi$-partial characters. Then, in the same way as before, we define projective indecomposable characters $\Phi_\varphi$ for $\varphi \in \mathrm{I}_\pi(G)$. It is proved in \cite[Corollary 3.8]{I18} that the characters $\Phi_\varphi$ constitute a basis for class functions vanishing off of $\pi$-elements. Again, the multiplicity of $\Phi_\varphi$ in $\Pi_{\pi,G}$ is an integer by~\cite[Problem~3.4]{I18} since $\Pi_{\pi,G}$ is a generalized character by Theorem~\ref{prop3}; as before, the result is stated there for characters, but the same proof goes through for generalized characters. The multiplicity is equal to a row sum in the $\pi$-partial character table of $G$ by~\cite[Problem~3.5]{I18}. It is remarked in the proof of \cite[Theorem~2.4]{chen21} that \cite[Proposition~2.2(b)]{chillag98} proves that the row sums in the $\pi$-partial character table are nonnegative real numbers. So the multiplicities must be nonnegative integers, and again $\Pi_{\pi,G}$ is a character of $G$.
\end{proof}

\begin{remark}
One can show that the row sums in the ordinary character table are nonnegative integers using the conjugating character~\cite[Problem~5.13]{I94}. In a nicely parallel result, the proof of Theorem \ref{prop4} shows that the row sums in the $p$-Brauer character table are nonnegative integers by using the ``local'' conjugating character~$\Pi_{p',G}$. This result sharpens \cite[Proposition~2.2(b)]{chillag98}, where it was shown that row sums in the $p$-Brauer table are nonnegative real numbers.
\end{remark}

Supported by Theorem~\ref{prop4} and computational evidence, we state the following conjecture, which would answer a question of Robinson in~\cite[Section~8]{Ro25}.

\begin{conjecture}\label{c:Picharacter}
Let $G$ be a finite group, and let $\pi$ be an arbitrary collection of primes. Then $\Pi_{\pi,G}$ is a character of $G$; that is, $[\Pi_{\pi,G},\chi] \ge 0$ for all $\chi \in \Irr{G}$. 
\end{conjecture}

A counterexample to Conjecture~\ref{c:Picharacter} would consist of a finite group $G$, a collection of primes~$\pi$ and a character $\chi \in \Irr{G}$ such that $[\Pi_{\pi,G},\chi]<0$. A natural first place to look for such a counterexample is among those groups $G$ and characters $\chi \in \Irr{G}$ whose multiplicities in the conjugating character $\Pi_G$ are as small as possible, namely \mbox{$[\Pi_G,\chi]=0$}; the idea being that one could perhaps make a clever choice of $\pi$ to force the multiplicity $[\Pi_{\pi,G},\chi]$ to be negative. If $G$ is a nonabelian simple group, the collection of such pairs $(G,\chi)$ is remarkably small: $[\Pi_G,\chi]=0$ if and only if $G \cong \mr{PSU}_n(q)$ ($n \ge 3$) with $n$ coprime to $2(q+1)$ and $\chi$ is the Weil character of degree $(q^n-q)/(q+1)$~\cite{heide13, heide06}. In our next theorem, we show that $\mr{PSU}_3(q)$ with $(3,q+1)=1$ and its Weil character $\chi$ of degree $q^2-q$ do not yield a counterexample to Conjecture~\ref{c:Picharacter} for any collection $\pi$ of primes.

Note that Conjecture~\ref{c:Picharacter} is equivalent to the statement that for any finite group and any collection $\pi$ of primes, the partial row sum over $\pi$-conjugacy classes in any row of the character table is a nonnegative integer, since for any $\chi \in \Irr{G}$,
\begin{align*}
    [\Pi_{\pi,G},\chi] = \sum_{g \in G_\pi} \frac{|\Cen{G}{g}|\q{\chi(g)}}{|G|} = \sum_{g \in G_\pi} \frac{\q{\chi(g)}}{|g^G|} = \sum_{g^G \in \mr{Cl}_\pi(G)} \q{\chi(g)} = \sum_{g^G \in \mr{Cl}_\pi(G)} \chi(g),
\end{align*}
where the last equality follows since $\Pi_{\pi,G}$ is a generalized character by Theorem~\ref{prop3}. This is the statement that we prove in Theorem~\ref{p:weil} for $\mr{PSU}_3(q)$ and $(3,q+1)=1$. We remark that this is not the approach taken in \cite{heide13,heide06} to investigate the multiplicities $[\Pi_G,\chi]$ for $\chi \in \Irr{G}$, as the authors take advantage of the fact that $\Pi_G$ decomposes into a sum of permutation characters on its conjugacy classes. (See \cite[Lemma~2.1]{heide13} for the consequences of this observation.) As we do not have such an interpretation of $\Pi_{\pi,G}$ -- in particular, $\Pi_{\pi,G}$ is not the sum of permutation characters on $\pi$-conjugacy classes, the latter being a character which takes the value $|\Cen{G}{g}_\pi|$ on each element $g \in G$ -- we consider the row sum of the character table directly. 
 
\begin{theorem}\label{p:weil}
If $G \cong \mr{PSU}_3(q)$ with $(3,q+1)=1$ and $\chi$ is the Weil character of degree $q^2-q$, then $[\Pi_{\pi,G},\chi]\ge 0$ for all collections $\pi$ of primes.
\end{theorem}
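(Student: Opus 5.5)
The plan is to compute the partial row sum $\sum_{g^G \in \mr{Cl}_\pi(G)} \chi(g)$ directly from the known character table of $\mr{PSU}_3(q)$ (Simpson--Frame). As remarked before the statement, $[\Pi_{\pi,G},\chi]$ equals this sum because $\Pi_{\pi,G}$ is a generalized character by Theorem~\ref{prop3}.

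Since $(3,q+1)=1$, we have $\mr{PSU}_3(q)=\mr{SU}_3(q)$ and the center is trivial. The Weil character of degree $q^2-q$ is then the unipotent character $\chi_{q^2-q}$, and its values depend only on the \emph{type} of the class. I would read these values off the table; my recollection, to be checked against Simpson--Frame, is:
\begin{itemize}
\item $q^2-q$ on the identity, $-q$ on transvections, $0$ on regular unipotents;
\item $1-q$ on semisimple classes with eigenvalue pattern $(a,a,a^{-2})$, $a\ne a^{-2}$ (type $C_4$), and $1$ on their products with a commuting transvection (type $C_5$);
\item $2$ on regular semisimple classes in the maximal torus of order $(q+1)^2$ (type $C_6$);
\item $0$ on the torus of order $q^2-1$ (type $C_7$);
\item $-1$ on the torus of order $q^2-q+1$ (type $C_8$).
\end{itemize}

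Next I would count the $\pi$-classes of each type in terms of $m=(q+1)_\pi$, $r=(q-1)_\pi$, $n=(q^2-q+1)_\pi$, and whether $p\in\pi$. The class of $g$ is a $\pi$-class exactly when the semisimple part lies in the $\pi$-part of the relevant torus and, if there is a unipotent part, $p\in\pi$. The counts are as follows.
\begin{itemize}
\item Type $C_4$ contributes $m-1$ classes, since $a$ runs over nontrivial elements of the $\pi$-part of the cyclic group of order $q+1$; note $a^3\neq 1$ because $(3,q+1)=1$.
\item Type $C_5$ contributes the same number $m-1$ when $p\in\pi$, and none otherwise.
\item Type $C_6$ contributes the number of $S_3$-orbits of regular $\pi$-elements in $C_m\times C_m$, which is roughly $(m-1)(m-2)/6$; I will compute it exactly with a Burnside count, and it should come out as an integer using $(3,q+1)=1$.
\item Type $C_8$ contributes $(n-1)/3$ classes; here $(q^2-q+1,q+1)=1$ because any common divisor divides $3$.
\item Type $C_7$ can be ignored since $\chi$ vanishes there.
\end{itemize}
The row sum then becomes the explicit expression
\begin{align*}
(q^2-q) + [p\in\pi]\bigl(-q+(m-1)\bigr) + (m-1)(1-q) + 2\,c_6(m) - \tfrac{n-1}{3},
\end{align*}
where $c_6(m)$ is the exact $C_6$ count.

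The final step is to prove that this expression is nonnegative, splitting into the cases $p\in\pi$ and $p\notin\pi$. I expect this inequality to be the main obstacle.
\begin{itemize}
\item The negative contribution $(m-1)(1-q)$ can nearly cancel the identity term: at $m=q+1$ they sum to exactly $0$. In that regime the bound must come from $2c_6(m)\approx (q^2-3q+2)/3$ dominating $(n-1)/3\le (q^2-q)/3$.
\item This is tight, so I would exploit coprimality. The numbers $q+1$ and $q^2-q+1$ are coprime, so if $m=q+1$ then $n$ is the $\pi$-part of $q^2-q+1$ with no correlation forced. I would therefore treat separately the subcases $n=q^2-q+1$ and $n\le (q^2-q+1)/5$, using the fact that a proper $\pi$-part of $q^2-q+1$ is at most a fifth of it, since all its prime divisors other than $3$ are $\equiv 1 \pmod 6$ and here $3\nmid q^2-q+1$.
\item For general $m<q+1$ we have $m\le (q+1)/2$, and the identity term $q^2-q$ dominates everything comfortably.
\end{itemize}
If the extreme case $m=q+1$, $n=q^2-q+1$ (i.e.\ $\pi$ containing all primes of $q^3+1$) is not settled by these estimates, I would check it by the exact counts. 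In that case the sum should equal the full row sum of $\chi$ over the non-$p$-singular classes minus the $C_7$ and unipotent contributions, which is a quantity I can cross-check against the known value $[\Pi_G,\chi]=0$.
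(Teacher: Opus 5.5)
Your proposal is correct, but it organizes the inequality differently from the paper.

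\textbf{The paper's route.} It argues by contradiction and works mostly with the total class counts $N_i$. For $q>9$, a negative sum needs more than $(q+1)/2$ classes of type $C_4$, and by Lagrange this forces $\pi(q+1)\subseteq\pi$. After that:
\begin{enumerate}
\item $C_4$ and $C_6$ contribute $-(q^2-q)$ and $+\frac13(q^2-q)$;
\item $C_2$ and $C_5$ cancel when $p\in\pi$;
\item $C_8$ contributes at least $-\frac13(q^2-q)$.
\end{enumerate}
The cases $q=3,4,7,9$ are checked in GAP.

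\textbf{Your route.} You write the sum in closed form. Your values and counts agree with the paper's table. The $C_6$ count is not just approximate but exact: the only $a\in\mu_m$ with $a^3=1$ is $a=1$, so $S_3$ acts freely on the $(m-1)(m-2)$ regular elements of the $\pi$-part of the torus. Hence $2c_6(m)=(m-1)(m-2)/3$. With $k=m-1$ and $\varepsilon=[p\in\pi]$, your expression simplifies to
\[
[\Pi_{\pi,G},\chi]=(q-1-\varepsilon)(q-k)+\frac{k(k-1)}{3}-\frac{n-1}{3}.
\]

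\textbf{One slip.} Your estimate $2c_6(q+1)\approx(q^2-3q+2)/3$ is off by one in $m$. Exactly, $2c_6(q+1)=(q^2-q)/3$, so at $m=q+1$ the sum equals $(q^2-q+1-n)/3\ge 0$ immediately. The subcase split on whether $n$ is a proper $\pi$-part, and the cross-check against $[\Pi_G,\chi]=0$, are therefore unnecessary.

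\textbf{The remaining case.} For $m\le(q+1)/2$, use $k\le(q-1)/2$, $\varepsilon\le1$ and $n-1\le q^2-q$. This gives the lower bound
\[
\frac{(q-2)(q+1)}{2}-\frac{q^2-q}{3}=\frac{(q-3)(q+2)}{6}\ge 0
\]
for all $q\ge 3$.

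\textbf{Comparison.} Your approach gives a uniform proof for every $q\ge3$ with no computer check; the paper's key inequality $\frac23q-\frac{q}{q-1}>\frac12(q+1)$ needs $q>9$. It also yields an exact formula, which shows the multiplicity vanishes precisely when $\pi\supseteq\pi(q^3+1)$. The paper's argument is shorter and more qualitative, and it needs the $\pi$-refined count only for $C_4$, not for $C_6$ or $C_8$.
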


\begin{proof}
To begin, we first calculate in GAP that $[\Pi_{\pi,G},\chi] \ge 0$ for all collections $\pi$ of primes and \mbox{$q=3,4,7,9$}; these are the prime powers $q \le 9$ with $(3,q+1)=1$. So we may assume that $q>9$. The character table of $\mr{PSU}_3(q)$ is given in \cite[Table~2]{Simpson73}, and the relevant information is summarized in Table~\ref{t:psu3}. We omit information about classes on which $\chi$ vanishes, as they will not contribute to the sum over conjugacy classes that calculates the multiplicity $[\Pi_{\pi,G},\chi]$. In Table~\ref{t:psu3}, $C_i$ labels the type of the conjugacy class as defined in \cite[Table~2]{Simpson73} whose elements are $\pi_i$-elements, $N_i$ is the number of conjugacy classes of type $C_i$, and $\chi(C_i)$ is the value of $\chi$ at any element of $C_i$. Note that elements in distinct members of the family of classes $C_i$ need not have the same order, but just the same character value.

\begin{table}[h]
    \centering
    \begin{tabular}{cccc}
    $C_i$ & $\pi_i$ & $N_i$ & $\chi(C_i)$  \\ \hline
    $C_1$ & $\emptyset$ & 1 & $q^2-q$  \\
    $C_2$ & $\pi(q)$ & 1 & $-q$  \\
    $C_3$ & -- & -- & $0$  \\
    $C_4$ & $\pi(q+1)$ & $q$ & $-(q-1)$  \\
    $C_5$ & $\pi(q(q+1))$ & $q$ & $1$  \\
    $C_6$ & $\pi(q+1)$ & $\frac16(q^2-q)$ & 2  \\
    $C_7$ & -- & -- & 0  \\
    $C_8$ & $\pi(q^2-q+1)$ & $\frac13(q^2-q)$ & $-1$ \\
    \end{tabular}
    \caption{Character table for $\mr{PSU}_3(q)$ with $(3,q+1)=1$.}
    \label{t:psu3}
\end{table}
Assume for contradiction that there is some $q$ and collection $\pi$ of primes such that $[\Pi_{\pi,G},\chi]<0$; that is, the sum over $\pi$-conjugacy classes in the row of the character table corresponding to $\chi$ is negative. Any collection $\pi$ admits the contribution from $N_1\chi(C_1)=q^2-q$. Now we use the character table to consider which primes are forced to belong to such a  $\pi$.

We first show that $\pi(q+1) \subseteq \pi$. The only negative contributions come from class types $C_2$, $C_4$ and $C_8$.  Note that we must include some classes of type $C_4$ because $N_1 \chi(C_1) + N_2 \chi(C_2) + N_8 \chi(C_8) = \frac{2}{3}q^2 - \frac53q$, which is greater than zero as $q>2$. Since $\chi(C_4)=-(q-1)$, the number of classes of type $C_4$ that we must include to obtain a negative multiplicity is bounded below by
\begin{align*}
    \frac{\frac{2}{3}q^2-\frac{5}{3}q}{q-1} = \frac23 q - \frac{q}{q-1} > \frac12 (q+1) \qquad \text{for } q>9.
\end{align*}
By \cite[Table~2]{Simpson73}, each class of type $C_4$ has a representative in a cyclic subgroup $T$ of order~$q+1$, namely $T=\{\mr{diag}(a,a,a^{-2}) \mid a^{q+1}=1\}$, contained in a maximal torus of order~$(q+1)^2$, and distinct nonidentity elements of $T$ lie in distinct classes. Therefore, given any subset $\tau \subseteq \pi(q+1)$ that we choose to include in our~$\pi$, the identity and the representatives that we obtain in a class of type $C_4$ constitute a maximal cyclic $\tau$-subgroup of $T$. The above calculation shows that we must include more than half the elements of $T$. So by Lagrange's theorem, all the elements of $T$ must be included. This means all of $\pi(q+1)$ is included in $\pi$, which is what we wanted to show.

Therefore, $[\Pi_{\pi,G},\chi]$ includes the contribution $N_4 \chi(C_4) = -(q^2-q)$. But since $\pi_6=\pi(q+1)$ as well, we also have the contribution $N_6\chi(C_6)=+\frac13(q^2-q)$. So if $\pi=\pi(q+1)$, we have $[\Pi_{\pi,G},\chi]=\frac13 (q^2-q)$, and we need to add more primes if we are to obtain a negative multiplicity.

Next, notice that if we add the defining prime $\pi(q)$ to $\pi$, then since $\pi(q+1) \subseteq \pi$, we obtain the contributions from classes of types $C_2$ and $C_5$, which exactly cancel. Therefore, it is inconsequential whether we include $\pi(q)$ in $\pi$ or not, as in both cases we are left with $[\Pi_{\pi,G},\chi]=\frac13(q^2-q)$ at this point.

Our last remaining choice is whether to add some subset of $\pi(q^2-q+1)$ to $\pi$. The contribution from all classes of type $C_8$ is $N_8\chi(C_8)=-\frac13(q^2-q)$, so regardless of which remaining primes we add to $\pi$, we always have $[\Pi_{\pi,G},\chi] \ge 0$. This contradiction concludes the proof.
\end{proof}

Having addressed the issue of $\Delta_{\pi,G}$ and $\Pi_{\pi,G}$ being (generalized) characters, we now turn to the decomposition of $\Delta_{\pi,G}$ and $\Pi_{\pi,G}$. This is Question~(2) of the introduction. It is natural to first ask when $\Delta_{\pi,G}$ or $\Pi_{\pi,G}$ is an irreducible character. That is, when are all but one of the multiplicities zero? The next theorem shows that this only identifies some degenerate cases.

\begin{theorem}\label{prop5} 
Let $G$ be a finite group and let $\pi$ be a collection of primes. Then $\Delta_{\pi,G}$ is an irreducible character of $G$ if and only if $G$ is an abelian~$\pi$-group. The generalized character $\Pi_{\pi,G}$ is an irreducible character of $G$ if and only if $G$ is the trivial group.
\end{theorem}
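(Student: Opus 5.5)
The plan is to use the multiplicity of the trivial character in each class function. Any irreducible character with positive multiplicity of $1_G$ must equal $1_G$, and then comparing degrees at the identity pins down $G$.

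\emph{The statement for $\Delta_{\pi,G}$.} Suppose first that $\Delta_{\pi,G}$ is an irreducible character. Then it is in particular a character, so Theorem~\ref{prop2} gives $d_\pi(G)=1$. Hence $[\Delta_{\pi,G},1_G]=d_\pi(G)=1$. Since $1_G$ occurs in $\Delta_{\pi,G}$ with positive multiplicity and $\Delta_{\pi,G}$ is irreducible, we get $\Delta_{\pi,G}=1_G$. Evaluating at the identity, and using that $1$ is a $\pi$-element with $\Cen{G}{1}=G$, gives
\begin{align*}
1 = \Delta_{\pi,G}(1) = \frac{|G|}{|G|_\pi}.
\end{align*}
So $|G|=|G|_\pi$, i.e.\ $G$ is a $\pi$-group. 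Theorem~\ref{prop2}(iv) also says $G$ has an abelian Hall $\pi$-subgroup, which is then $G$ itself, so $G$ is abelian.

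Conversely, let $G$ be an abelian $\pi$-group. Every element is a $\pi$-element with $\Cen{G}{g}=G$, and $|G|_\pi=|G|$. Hence $\Delta_{\pi,G}(g)=|G|/|G|=1$ for all $g\in G$, so $\Delta_{\pi,G}=1_G$ is irreducible.

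\emph{The statement for $\Pi_{\pi,G}$.} By Example~\ref{ex:pi}, $[\Pi_{\pi,G},1_G]=k_\pi(G)$. This is at least $1$, because the identity class is always a $\pi$-class. If $\Pi_{\pi,G}$ is an irreducible character, the same argument as above forces $\Pi_{\pi,G}=1_G$. Evaluating at $1$ gives $|G|=|\Cen{G}{1}|=\Pi_{\pi,G}(1)=1$, so $G$ is trivial. Conversely, if $G=1$ then $\Pi_{\pi,G}(1)=1$, so $\Pi_{\pi,G}=1_G$ is irreducible.

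I do not expect a real obstacle here; both directions reduce to evaluating the class functions at the identity. The only step with content is invoking Theorem~\ref{prop2} to obtain $d_\pi(G)=1$ and the abelian Hall $\pi$-subgroup in the $\Delta_{\pi,G}$ case. One could instead use the formula $[\Delta_{\pi,G},\chi]=[1_{A_\pi},\chi|_{A_\pi}]$ from its proof, but this is not needed.
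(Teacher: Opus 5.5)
Your proof is correct and follows essentially the paper's argument: positivity of $[\Delta_{\pi,G},1_G]=d_\pi(G)$ and $[\Pi_{\pi,G},1_G]=k_\pi(G)$ forces an irreducible $\Delta_{\pi,G}$ or $\Pi_{\pi,G}$ to equal $1_G$. The differences are cosmetic. You get $|G|=|G|_\pi$ and $|G|=1$ by evaluating at the identity, and you get abelianness from Theorem~\ref{prop2}. The paper instead gets a $\pi$-group from the non-vanishing of $\Pi_{\pi,G}$ and then uses $d(G)=1$ and $k(G)=1$. Your appeal to Theorem~\ref{prop2} for $d_\pi(G)=1$ is also unnecessary, since $d_\pi(G)>0$ already suffices.
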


\begin{proof}
Assume that $\Delta_{\pi,G}$ or $\Pi_{\pi,G}$ is irreducible. The key here is to simply recall that $[\Delta_{\pi,G},1_G]=d_\pi(G) > 0$ and $[\Pi_{\pi,G},1_G]=k_\pi(G)>0$. So $\Delta_{\pi,G}$ or $\Pi_{\pi,G}$ is irreducible if and only if it is equal to the trivial character. In particular, $\Pi_{\pi,G}$ cannot vanish on any element of $G$ in either case, so $G$ is a $\pi$-group by the definition of~$\Pi_{\pi,G}$. Now, we have $\Delta_{\pi,G}=1_G$, so $1=[\Delta_{\pi,G},1_G]=d_\pi(G)=d(G)$, so $G$ is an abelian $\pi$-group. Similarly, $\Pi_{\pi,G}=1_G$, so $1=[\Pi_{\pi,G},1_G]=k_\pi(G)=k(G)$, so $G$ has only one conjugacy class, i.e. $G$ is the trivial group. 

The reverse implications are obvious because $\Delta_{\pi,G}=1_G$ and $\Pi_{\pi,G}=1_G$ under the assumptions of the theorem.
\end{proof}

Theorem~\ref{prop5} shows that $\Pi_{\pi,G}$ has only one constituent if and only if $G$ is the trivial group. We might then suspect that $\Pi_{\pi,G}$ has many constituents when $G$ is nontrivial. As we mentioned above, this is a well-known and much-studied property of the conjugating character $\Pi_G$. Decomposing $\Pi_{\pi,G}$ into its constituents seems difficult in general, but it could be interesting to study its decomposition for some particular classes of groups and values of $\pi$. For example, it is known that $[\Pi_{2',S_n},\chi]>0$ for all $\chi \in \Irr{S_n}$ and $n \ge 1$~\cite[Theorem~4.9]{Su18}; this is more than what is implied by Theorem~\ref{prop4}(i) with $p=2$.

\section{Characterizing group structure}\label{s:Q3}

In this section, we turn to Question~(3) of the introduction, which asks to characterize group structure in terms of the behavior of a class function. We can state our first result for the more general class functions of Example~\ref{ex:X}. Recall that if $X \subseteq G$ is any subset closed under conjugation, then we define $\Pi_{X,G}(g)=|\Cen{G}{g}|$ if $g \in X$ and $0$ otherwise. We write $\mr{Cl}_X(G)$ to denote the set of $G$-conjugacy classes contained in $X$. Letting $X=G_\pi$, the set of $\pi$-elements of $G$, recovers $\Pi_{\pi,G}$ from the last section.

\begin{theorem}\label{t:Xclasses}
Let $H$ be a subgroup of a finite group $G$. Let $X \subseteq H$ be a subset closed under conjugation by $H$, and let $Y \subseteq G$ be a subset closed under conjugation by $G$. The following are equivalent:
\begin{enumerate}[nolistsep,label=(\roman*)]
    \item $(\Pi_{X,H})^G = \Pi_{Y,G}$.
    \item The map $h^H \mapsto h^G$ is a bijection $\mr{Cl}_X(H) \rightarrow \mr{Cl}_Y(G)$.
\end{enumerate}
\end{theorem}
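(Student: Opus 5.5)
Compute the induced class function explicitly and compare it with $\Pi_{Y,G}$ pointwise. For $g\in G$ the induction formula gives
\begin{align*}
(\Pi_{X,H})^G(g) = \frac{1}{|H|}\sum_{\substack{t\in G\\ tgt^{-1}\in H}} \Pi_{X,H}(tgt^{-1}) = \frac{1}{|H|}\sum_{\substack{t\in G\\ tgt^{-1}\in X}} |\Cen{H}{tgt^{-1}}|.
\end{align*}
Group the terms according to the $H$-class of $h=tgt^{-1}$. For a fixed $H$-class $h^H\subseteq X\cap g^G$, the number of $t\in G$ with $tgt^{-1}\in h^H$ is $|h^H|\cdot|\Cen{G}{g}|$. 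Each such term contributes $|\Cen{H}{h}|$, so the class contributes $|h^H||\Cen{G}{g}||\Cen{H}{h}|/|H| = |\Cen{G}{g}|$. Hence
\begin{align*}
(\Pi_{X,H})^G(g) = |\Cen{G}{g}|\cdot n(g), \qquad n(g):=\#\{h^H\in\mr{Cl}_X(H) \mid h^H\subseteq g^G\}.
\end{align*}
This is the single computation the whole proof rests on, and it is routine. The only care needed is to check the count of $t$: the map $t\mapsto tgt^{-1}$ from $G$ onto $g^G$ has fibres of size $|\Cen{G}{g}|$.

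Given this formula, (i) holds if and only if, for every $g\in G$, we have $n(g)=1$ when $g\in Y$ and $n(g)=0$ when $g\notin Y$. Here I use that $|\Cen{G}{g}|\neq 0$, and that both $n(g)$ and membership in $Y$ depend only on the $G$-class of $g$.

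Now translate this condition into (ii). Consider the map $\mr{Cl}_X(H)\to\mr{Cl}(G)$, $h^H\mapsto h^G$. The condition that $n(g)=0$ for $g\notin Y$ says that every $G$-class meeting $X$ lies in $Y$, so the map lands in $\mr{Cl}_Y(G)$. The condition that $n(g)\le 1$ says the map is injective. The condition that $n(g)\ge1$ for $g\in Y$ says it is surjective onto $\mr{Cl}_Y(G)$. Conversely, if (ii) holds, then each $G$-class in $Y$ contains exactly one $H$-class from $X$, and no $G$-class outside $Y$ contains any. Thus $n=\mathbf{1}_Y$ as a function on $G$, and (i) follows.

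One subtlety, which I expect to be the only real obstacle, is the interpretation of (ii) as requiring the map to take values in $\mr{Cl}_Y(G)$. That is, $X\subseteq Y$ is built into the statement that $h^H\mapsto h^G$ is a map into $\mr{Cl}_Y(G)$. I will state this explicitly. Under that reading, the equivalence is exactly the pointwise comparison above.
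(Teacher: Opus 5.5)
Your proposal is correct and follows essentially the paper's route. The paper also reduces everything to the formula $(\Pi_{X,H})^G(g)=m\,|\Cen{G}{g}|$, with $m$ the number of $H$-classes in $g^G\cap X$ (it cites the induction formula from Isaacs rather than re-deriving it by counting fibres of $t\mapsto tgt^{-1}$), and then reads off $X\subseteq Y$, injectivity and surjectivity from the pointwise comparison, using the same reading of (ii) that forces the image into $\mr{Cl}_Y(G)$.
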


\begin{proof}
Letting $g\in G$, we begin by deriving a formula for $(\Pi_{X,H})^G(g)$. The set $g^G \cap X$ is fixed setwise under conjugation by elements of $H$, so it is equal to a (possibly empty) union $h_1^H \cup \cdots \cup h_m^H$ of $H$-conjugacy classes in $X$. If $g^G \cap X$ is nonempty, then using the definition of an induced character (see, in particular, \cite[p.~64]{I94}), we have that
\begin{align*}
    (\Pi_{X,H})^G(g) = |\Cen{G}{g}| \sum_{i=1}^m \frac{\Pi_{X,H}(h_i)}{|\Cen{H}{h_i}|} = m |\Cen{G}{g}|.
\end{align*}
We write this as
\begin{align}\label{eq:ind}
    (\Pi_{X,H})^G(g) = \begin{cases}
    m |\Cen{G}{g}| & \text{if } g^G \cap X \text{ is a union of $m\ge1$ $H$-classes,} \\
    0 & \text{if } g^G \cap X = \emptyset.
    \end{cases}
    \tag{$*$}
\end{align}

Now we continue with the proof of the theorem.

(ii) $\Rightarrow$ (i): First note that $X \subseteq Y$ since, if $h \in X$, then $h^G \in \mr{Cl}_Y(G)$ by assumption, and so $h \in Y$. Now let $g \in G$. If $g \not\in Y$, then $\Pi_{Y,G}(g)=0$ by definition. Since $X \subseteq Y$, $g^G \cap X \subseteq g^G \cap Y = \emptyset$, and so $(\Pi_{X,H})^G(g) = 0$ as well by Equation~(\ref{eq:ind}). Now assume that $g \in Y$. By the bijection in (ii), $g^G \cap X$ is exactly one $H$-class: surjectivity gives some $h \in H$ with $h \in g^G \cap X$, and if two distinct $H$-classes sat inside $g^G \cap X$, they would have the same image, contradicting injectivity. Therefore, $m=1$ in~(\ref{eq:ind}), and so $(\Pi_{X,H})^G(g)=|\Cen{G}{g}|=\Pi_{Y,G}(g)$.

(i) $\Rightarrow$ (ii): Let $h \in X$. Then of course $h^G \cap X \neq \emptyset$, and so $(\Pi_{X,H})^G(h) \neq 0$ by (\ref{eq:ind}). Since $(\Pi_{X,H})^G(h) = \Pi_{Y,G}(h)$, this means, in particular, that $h \in Y$. So again $X \subseteq Y$. Therefore, $h^H \mapsto h^G$ is indeed a map $\mr{Cl}_X(H) \rightarrow \mr{Cl}_Y(G)$.

The map is injective: Let $h_1^H,h_2^H \in \mr{Cl}_X(H)$ such that $h_1^G=h_2^G \in \mr{Cl}_Y(G)$. By assumption, $(\Pi_{X,H})^G(h_1) = \Pi_{Y,G}(h_1)$, which means that $h_1^G \cap X$ is a single $H$-class, namely~$h_1^H$, by (\ref{eq:ind}). In the same way, $h_2^G \cap X = h_2^H$. Therefore, \mbox{$h_1^H = h_1^G \cap X = h_2^G \cap X = h_2^H$.}

The map is surjective: 
Let $g^G \in \mr{Cl}_Y(G)$. By assumption, $(\Pi_{X,H})^G(g) = \Pi_{Y,G}(g)$, which means that $g^G \cap X$ is exactly one $H$-class lying in $X$, say $h^H \in \mr{Cl}_X(H)$, by (\ref{eq:ind}). Hence, $h^H \mapsto h^G = g^G$.
\end{proof}

We note that in all of our applications of Theorem~\ref{t:Xclasses}, $X=Y \cap H$. 

Theorem~\ref{t:Xclasses} characterizes when $(\Pi_{X,H})^G = \Pi_{Y,G}$ group-theoretically. We are also interested in a representation-theoretic characterization of such equalities. As our starting point, consider the following restatement of a theorem of Sangroniz.

\begin{theorem}{\cite[Theorem~3.3]{S08}}\label{t:sangronitz}
Let $p$ be a prime, and let $H$ be a subgroup of a finite $p$-solvable group $G$. Then the following are equivalent:
\begin{enumerate}[nolistsep,label=(\roman*)]
       \item $d_{p'}(G)=d_{p'}(H)$.
       \item Restriction is a surjection $\mr{IBr}_p(G) \rightarrow \mr{IBr}_p(H)$.
\end{enumerate}
\end{theorem}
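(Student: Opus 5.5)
Since $d_{p'}(G)=k_{p'}(G)/|G|_{p'}$ and $k_{p'}(G)=|\mr{IBr}_p(G)|$, condition (i) says exactly that $|\mr{IBr}_p(G)|=|G:H|_{p'}\,|\mr{IBr}_p(H)|$. The plan is therefore to prove the inequality
\begin{align*}
    |\mr{IBr}_p(G)| \le |G:H|_{p'}\,|\mr{IBr}_p(H)|,
\end{align*}
that is, $d_{p'}(G)\le d_{p'}(H)$, for every $H\le G$ with $G$ $p$-solvable. I would then read off the equality case as (ii). This mirrors the classical argument for $d(G)\le d(H)$, with Brauer characters replacing ordinary characters and projective indecomposable characters replacing regular characters.

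\textbf{Step 1: the main inequality.} By Fong--Swan, every $\varphi\in\mr{IBr}_p(G)$ lifts to some $\chi\in\Irr{G}$ with $\chi^0=\varphi$, so Brauer characters of $G$ behave like ordinary characters restricted to $p$-regular classes. For $\psi\in\mr{IBr}_p(H)$, write $(\Phi_\psi)^G=\sum_{\varphi} a_{\varphi\psi}\Phi_\varphi$. By Frobenius reciprocity for the pairing $[\,\cdot\,,\,\cdot\,]^0$ used in Theorem~\ref{prop4}(i), $a_{\varphi\psi}$ is the multiplicity of $\psi$ in $\varphi_H$.

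Evaluate at $1$ and use two facts: $\Phi_\psi(1)$ has $p$-part $|H|_p$, and in $p$-solvable groups $\Phi_\varphi(1)=|G|_p\,\varphi(1)$ (Fong's dimension formula). This gives $|G:H|_{p'}\,\psi(1)=\sum_\varphi a_{\varphi\psi}\varphi(1)$. Summing suitably over $\psi$, and comparing with the count of nonzero $a_{\varphi\psi}$, should give $|G:H|_{p'}\,|\mr{IBr}_p(H)|\ge|\mr{IBr}_p(G)|$, with equality forcing each $\varphi_H$ to be irreducible. This is the analogue of the Gallagher-type inequality $k(G)\le |G:H|\,k(H)$.

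Alternatively, I would get the same inequality group-theoretically. Take a Hall $p'$-subgroup $K$ of $G$ with $K\cap H$ a Hall $p'$-subgroup of $H$, which is possible since $G$ is $p$-solvable. Then count pairs $(x,y)$ of commuting elements with $x$ a $p'$-element, as in the identity $|G|\,k_{p'}(G)=\#\{(x,g): x\in G_{p'},\ g\in\Cen{G}{x}\}$, and compare the counts for $G$ and $H$.

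\textbf{Step 2: the equivalence.} For (i)$\Rightarrow$(ii), equality in Step 1 forces every $\varphi_H$ to be irreducible. Surjectivity then follows because every $\psi\in\mr{IBr}_p(H)$ lies under some $\varphi\in\mr{IBr}_p(G)$ \cite[Corollary 8.3]{N98}, exactly as in the proof of Theorem~\ref{p:Sang2}. For (ii)$\Rightarrow$(i), assume restriction is a surjection. Restricted Brauer characters that are distinct are linearly independent, so the counts are controlled. The remaining issue is that distinct $\varphi$ may restrict to the same $\psi$. The identity $\sum_\varphi a_{\varphi\psi}\varphi(1)=|G:H|_{p'}\psi(1)$ then forces each fibre of the restriction map to account for exactly the factor $|G:H|_{p'}$, which yields equality in Step 1 and hence (i).

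\textbf{Main obstacle.} The hard step is Step 1, especially the precise degree bookkeeping. I need to show that the $p'$-part of the index enters exactly, so that $\sum_\varphi a_{\varphi\psi}\varphi(1)=|G:H|_{p'}\psi(1)$. This relies on Fong's result $\Phi_\varphi(1)=|G|_p\varphi(1)$, which is where $p$-solvability is essential and which fails in general. I would first try to prove the inequality via the commuting-pairs count over Hall $p'$-subgroups, since Hall's theorem makes the $p'$-part visible. I would fall back on the projective-character argument if the equality analysis there does not translate cleanly into the statement about restriction of Brauer characters.
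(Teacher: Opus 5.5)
Your argument is correct, but it takes a genuinely different route from the paper. The paper does not argue the equivalence itself. It specializes Sangroniz's Theorem~3.3 to $\pi=p'$, which gives that $d_{p'}(G)=d_{p'}(H)$ iff every $\varphi\in\mr{IBr}_p(G)$ restricts irreducibly to $H$. It then gets surjectivity from \cite[Corollary~8.3]{N98}.

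You instead reprove the cited result directly, by a Brauer-character version of Gallagher's argument. You write $(\Phi_\psi)^G=\sum_\varphi a_{\varphi\psi}\Phi_\varphi$, where $a_{\varphi\psi}$ is the multiplicity of $\psi$ in $\varphi_H$. Fong's formula then turns the degree comparison into $\sum_\varphi a_{\varphi\psi}\varphi(1)=|G:H|_{p'}\psi(1)$.

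You should write out the ``summing suitably'' step. Since $\varphi(1)\ge\psi(1)$ whenever $a_{\varphi\psi}\neq 0$, at most $|G:H|_{p'}$ members of $\mr{IBr}_p(G)$ lie over each $\psi$. Every $\varphi$ lies over some $\psi$, so $k_{p'}(G)\le|G:H|_{p'}\,k_{p'}(H)$, with equality exactly when every $\varphi_H$ is irreducible.

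A few small corrections:
\begin{enumerate}[nolistsep,label=(\alph*)]
    \item Knowing that $\Phi_\psi(1)$ has $p$-part $|H|_p$ is not enough. You need the exact formula $\Phi_\psi(1)=|H|_p\,\psi(1)$, which holds because $H$ is again $p$-solvable.
    \item Fong--Swan is never used.
    \item The commuting-pairs alternative is unnecessary, and it would not by itself give the Brauer-character description of the equality case.
    \item For (ii)$\Rightarrow$(i) you only need each $\varphi_H$ to be irreducible; surjectivity plays no role there.
\end{enumerate}

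What your route buys is transparency. It pinpoints Fong's dimension formula as the one place where $p$-solvability enters, consistent with Navarro's non-$p$-solvable counterexamples mentioned after Theorem~\ref{t:resbij}. It gives the inequality $d_{p'}(G)\le d_{p'}(H)$ for free. In the equality case it shows every fibre of restriction has size exactly $|G:H|_{p'}$, so restriction is a bijection iff $[G:H]_{p'}=1$. The paper later extracts this same fact, by counting classes, in Theorem~\ref{t:resbij}(2) and Example~\ref{ex:refine}. The paper's citation is shorter but leaves all of this inside the black box.
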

\begin{proof}
Taking $\pi=p'$ in \cite[Theorem~3.3]{S08} for the $p$-solvable group $G$, $d_{p'}(G) = d_{p'}(H)$ if and only if all irreducible Brauer characters of $G$ restrict irreducibly to $H$. Since every irreducible Brauer character of $H$ is a constituent of the restriction of some irreducible Brauer character of $G$ by \cite[Corollary~8.3]{N98}, this is equivalent to restriction being a surjection $\mr{IBr}_p(G) \rightarrow \mr{IBr}_p(H)$.
\end{proof}

Since $(\Delta_{p',H})^G = \Delta_{p',G}$ implies that $d_{p'}(H) = d_{p'}(G)$ by Frobenius reciprocity, the representation-theoretic characterization of $(\Delta_{p',H})^G = \Delta_{p',G}$ in the next theorem is a refinement of Theorem~\ref{t:sangronitz}. We show that it is a strict refinement in Example~\ref{ex:refine}.

\begin{theorem}\label{t:resbij}
Let $p$ be a prime, and let $H$ be a subgroup of a finite group~$G$.

(1)  If restriction is a bijection $\mr{IBr}_p(G) \rightarrow \mr{IBr}_p(H)$, then $(\Pi_{p',H})^G=\Pi_{p',G}$, but the converse does not hold.

(2) Now assume that $G$ is $p$-solvable. Then the following are equivalent:
\begin{enumerate}[nolistsep,label=(\roman*)]
       \item $(\Delta_{p',H})^G = \Delta_{p',G}$.
       \item Restriction is a bijection $\mr{IBr}_p(G) \rightarrow \mr{IBr}_p(H)$ and $[G:H]_{p'}=1$.
\end{enumerate}
In particular, either condition in (2) implies that $(\Pi_{p',H})^G = \Pi_{p',G}$. 
\end{theorem}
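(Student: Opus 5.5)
The plan is to reduce everything to Theorem~\ref{t:Xclasses}, applied with $X=H_{p'}=G_{p'}\cap H$ and $Y=G_{p'}$. With this choice $\Pi_{X,H}=\Pi_{p',H}$ and $\Pi_{Y,G}=\Pi_{p',G}$. So $(\Pi_{p',H})^G=\Pi_{p',G}$ holds exactly when the fusion map $f\colon \mr{Cl}_{p'}(H)\to\mr{Cl}_{p'}(G)$, $h^H\mapsto h^G$, is a bijection. Both parts then become statements comparing $f$ with restriction of Brauer characters.

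For (1), I would use that $\mr{IBr}_p(G)$ is a basis of the space $\mr{cf}_{p'}(G)$ of class functions on $G_{p'}$ (\cite[Theorem~2.13]{N98}, or the corresponding statement for Brauer characters in \cite{N98}), and likewise for $H$. Restriction $\mr{Res}\colon \mr{cf}_{p'}(G)\to\mr{cf}_{p'}(H)$ is pullback along $f$, i.e.\ $\mr{Res}(\theta)=\theta\circ f$ on classes.
\begin{itemize}
\item If restriction is a bijection $\mr{IBr}_p(G)\to\mr{IBr}_p(H)$, it sends a basis to a basis, so $\mr{Res}$ is a linear isomorphism.
\item Pullback along $f$ is injective iff $f$ is surjective: a $G$-class missed by $f$ supports a nonzero function in the kernel.
\item Pullback is surjective iff $f$ is injective: if two $H$-classes fuse, every pulled-back function agrees on them, so the characteristic function of one of them is not in the image.
\end{itemize}
Hence $f$ is bijective, and Theorem~\ref{t:Xclasses} gives $(\Pi_{p',H})^G=\Pi_{p',G}$.

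For the failure of the converse, I need $H\le G$ where fusion of $p$-regular classes is bijective but some irreducible Brauer character of $G$ restricts reducibly. Bijective fusion forces $|\mr{IBr}_p(G)|=|\mr{IBr}_p(H)|$, but the restriction matrix need only be an invertible nonnegative integer matrix, not a permutation matrix. I would search small groups in GAP over pairs $(G,H,p)$, testing fusion bijectivity and irreducibility of restrictions. The small cases I checked all satisfy the hypothesis of (1): $S_3\ge C_2$ with $p=3$, $A_4\ge C_3$ with $p=2$, and $S_4\ge S_3$ with $p=2$. So I expect the example to come from a less symmetric, likely non-$p$-solvable configuration. Finding this explicit example is the step I expect to be the main obstacle.

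For (2), (i)$\Rightarrow$(ii): evaluate both sides at $1$. We have $\Delta_{p',H}(1)=|H|/|H|_{p'}=|H|_p$, so $(\Delta_{p',H})^G(1)=[G:H]\,|H|_p$, while $\Delta_{p',G}(1)=|G|_p$. Equality forces $[G:H]_{p'}=1$, i.e.\ $|G|_{p'}=|H|_{p'}$. Multiplying (i) by this common value gives $(\Pi_{p',H})^G=\Pi_{p',G}$, so $f$ is bijective by Theorem~\ref{t:Xclasses}. In particular $k_{p'}(G)=k_{p'}(H)$, hence $d_{p'}(G)=d_{p'}(H)$. Theorem~\ref{t:sangronitz}, using $p$-solvability, then makes restriction a surjection $\mr{IBr}_p(G)\to\mr{IBr}_p(H)$. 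Since $|\mr{IBr}_p(G)|=k_{p'}(G)=k_{p'}(H)=|\mr{IBr}_p(H)|$, this surjection is a bijection.

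For (ii)$\Rightarrow$(i): part (1) gives $(\Pi_{p',H})^G=\Pi_{p',G}$. Dividing by $|G|_{p'}=|H|_{p'}$ yields (i), since induction is linear. The final sentence of the statement follows because both conditions imply (ii), which in turn gives the $\Pi$-equality by (1).
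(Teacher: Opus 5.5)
Your proof of the positive direction of (1) and of both implications in (2) is correct and follows the paper's route. You reduce to Theorem~\ref{t:Xclasses} with $X=H_{p'}$ and $Y=G_{p'}$, and show that a bijective restriction $\mr{IBr}_p(G)\to\mr{IBr}_p(H)$ forces bijective fusion of $p$-regular classes; your pullback argument is a clean linear-algebra version of the paper's counting-plus-separation argument. For (2) you extract $[G:H]_{p'}=1$, pass to $\Pi$, and upgrade Sangroniz's surjection to a bijection by counting.

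The gap is the clause ``but the converse does not hold.'' It is part of the statement and needs an explicit triple $(G,H,p)$; you supply none, and your search heuristic points the wrong way. Your own part (2) shows why your test cases could not work. When $[G:H]_{p'}=1$, the equalities $(\Pi_{p',H})^G=\Pi_{p',G}$ and $(\Delta_{p',H})^G=\Delta_{p',G}$ differ only by the common scalar $|G|_{p'}=|H|_{p'}$, so they are equivalent. For $p$-solvable $G$, (2) then forces restriction to be a bijection. Your cases $S_3\ge C_2$ ($p=3$), $A_4\ge C_3$ ($p=2$) and $S_4\ge S_3$ ($p=2$) are all solvable with $[G:H]_{p'}=1$.

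So you do not need non-$p$-solvable groups; you need $[G:H]_{p'}>1$. The paper uses the Frobenius group $G=(C_3\times C_3)\rtimes C_8$, with $C_8$ permuting the $8$ nonidentity kernel elements transitively, together with $H\cong S_3$ and $p=2$.
\begin{itemize}
\item The $2$-regular elements of $G$ are exactly the kernel, which contributes a single nonidentity class.
\item $S_3$ also has a single nonidentity $2$-regular class, so fusion is bijective.
\item The nonlinear member of $\mr{IBr}_2(G)$ has degree $8$, while that of $H$ has degree $2$, so restriction is not even a map $\mr{IBr}_2(G)\to\mr{IBr}_2(H)$.
\end{itemize}
A smaller witness is $G=A_4$, $H=\langle (12)(34)\rangle$, $p=3$. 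Both groups have exactly two $3$-regular classes and fusion is bijective. However, the degree-$3$ Brauer character of $A_4$ takes the value $-1$ on double transpositions, so it restricts to $1_H+2\lambda$, where $\lambda$ is the nontrivial character of $H$.
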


\begin{proof}
(1) We first show that if restriction is a bijection $\mr{IBr}_p(G) \rightarrow \mr{IBr}_p(H)$, then \mbox{$h^H \mapsto h^G$} is a bijection \mbox{$\mr{Cl}_{p'}(H) \rightarrow \mr{Cl}_{p'}(G)$}. This shows that $(\Pi_{p',H})^G=\Pi_{p',G}$ by Theorem~\ref{t:Xclasses}. It is clearly a well-defined map. Since $|\mr{IBr}_p(G)|=|\mr{Cl}_{p'}(G)|$, the bijection implies that $|\mr{Cl}_{p'}(G)| = |\mr{Cl}_{p'}(H)|$. So we need only show that $h^H \mapsto h^G$ is an injective map $\mr{Cl}_{p'}(H) \rightarrow \mr{Cl}_{p'}(G)$; that is, $h^G \cap H$ is at most one $H$-class. If not, there exists some irreducible Brauer character $\psi \in \mr{IBr}_p(H)$ that takes different values on the distinct $H$-classes $h_1^H,h_2^H \subseteq h^G \cap H$ since $\mr{IBr}_p(H)$ spans the vector space of complex-valued class functions on $H_{p'}$. That is, $\psi(h_1) \neq \psi(h_2)$. Since restriction $\mr{IBr}_p(G) \rightarrow \mr{IBr}_p(H)$ is a bijection, $\psi=\varphi_H$ for some $\varphi \in \mr{IBr}_p(G)$. But $\varphi$ is a class function on~$G$, so $\psi(h_1) = \varphi_H(h_1) = \varphi_H(h_2) = \psi(h_2)$, a contradiction.

To show that the converse fails, consider the following example identified by Navarro~\cite{N17}. Let $G$ be the solvable Frobenius group $(C_3 \times C_3) \rtimes C_8$ of order $72$ with $C_8$ permuting the $8$ elements of order $3$ in $C_3 \times C_3$; this is  \texttt{SmallGroup(72,39)} in GAP. Let $H \le G$ be isomorphic to $\mr{S}_3$, and let $p=2$. Then $[G:H]_{p'}=3$. Both $G$ and $H$ have just one nonidentity class of $2$-regular elements, so $h^H \mapsto h^G$ is a bijection $\mr{Cl}_{2'}(H) \rightarrow \mr{Cl}_{2'}(G)$. This means $(\Pi_{2',H})^G=\Pi_{2',G}$ by Theorem~\ref{t:Xclasses}. However, the only nonlinear degree of an irreducible $2$-Brauer character of $G$ is $8$, and that of $H$ is~$2$; so restriction is not even a map $\mr{IBr}_2(G) \rightarrow \mr{IBr}_2(H)$, let alone a bijection. 

(2) First note that since $\Delta_{p',G}=(1/|G|_{p'}) \Pi_{p',G}$ by definition, Equation~(\ref{eq:ind}) in the proof of Theorem~\ref{t:Xclasses} implies that
\begin{align*}\label{eq:delta}
    (\Delta_{p',H})^G(g)=
    \begin{cases}
        m\dfrac{|\Cen{G}{g}|}{|H|_{p'}} & \text{if $g^G \cap H_{p'}$ is a union of $m\ge 1$ $H$-classes} \\
        0 & \text{if $g^G \cap H_{p'} = \emptyset$}
    \end{cases}
\end{align*}
for all $g \in G$. Note that we have taken $Y=G_{p'}$ and $X=Y \cap H = H_{p'}$ in Theorem~\ref{t:Xclasses}. In particular, we have $(\Delta_{p',H})^G = \Delta_{p',G}$ if and only if for every $p$-regular $g \in G$
\begin{align*}
    \frac{|\Cen{G}{g}|}{|G|_{p'}} = \Delta_{p',G}(g) = (\Delta_{p',H})^G(g) = m \frac{|\Cen{G}{g}|}{|H|_{p'}}
\end{align*}
and $(\Delta_{p',H})^G(g) = 0 = \Delta_{p',G}(g)$ for all $p$-singular $g \in G$. Summarizing the situation implied by this equation, we have
\begin{align*}
    &(\Delta_{p',H})^G = \Delta_{p',G} \\ 
    &\iff
    \text{$[G:H]_{p'}=1$ and $m=1$ for every $g^G \in \mr{Cl}_{p'}(G)$} \\
    &\iff
    [G:H]_{p'}=1 \text{ and }(\Pi_{p',H})^G = \Pi_{p',G}.
\end{align*}

(i) $\Rightarrow$ (ii): As we have mentioned before, $(\Delta_{p',H})^G = \Delta_{p',G}$ implies that $d_{p'}(H)=d_{p'}(G)$. So restriction is a surjection $\mr{IBr}_p(G) \rightarrow \mr{IBr}_p(H)$ by Theorem~\ref{t:sangronitz}. By what we just showed, (i) implies that $[G:H]_{p'}=1$ and $(\Pi_{p',H})^G = \Pi_{p',G}$. Therefore, $|\mr{IBr}_p(H)| = |\mr{Cl}_{p'}(H)|=|\mr{Cl}_{p'}(G)| = |\mr{IBr}_p(G)|$ by Theorem~\ref{t:Xclasses}, and so restriction is a bijection.

(ii) $\Rightarrow$ (i): By (1), $(\Pi_{p',H})^G=\Pi_{p',G}$, and the additional assumption that $[G:H]_{p'}=1$ ensures that $(\Delta_{p',H})^G=\Delta_{p',G}$ by what we just showed. 
\end{proof}

Theorem~\ref{t:sangronitz} cannot be extended in either direction when $G$ is not $p$-solvable; see the examples due to G.~Navarro at the end of \cite{S24}. So of course our Theorem~\ref{t:resbij}(2) cannot be extended to non-$p$-solvable groups. The next example shows that Theorem~\ref{t:resbij}(2) is a strict refinement of Theorem~\ref{t:sangronitz}.

\begin{example}\label{ex:refine}
We show that there exists a $p$-solvable group $G$ and subgroup \mbox{$H \le G$} such that $d_{p'}(G)=d_{p'}(H)$ but $\Delta_{p',G} \neq (\Delta_{p',H})^G$; that is, the invariant $d_{p'}(G)$ cannot distinguish between $G$ and $H$, but $\Delta_{p',G}$ can. So $\Delta_{p',G}$ accomplishes the task of distinguishing groups by their ``timbre'' as set out in the introduction.

In fact, if $G$ is $p$-solvable and the invariants are equal, then the class functions are equal if and only if $[G:H]_{p'}=1$: The forward direction follows directly from Theorem~\ref{t:resbij}(2). For the reverse direction, $d_{p'}(G)=d_{p'}(H)$ implies restriction is a surjection by Theorem~\ref{t:sangronitz}, and $[G:H]_{p'}=1$ implies furthermore that $k_{p'}(G)=k_{p'}(H)$. So restriction is a bijection, and we may apply Theorem~\ref{t:resbij}(2). 

Let $G = \mr{S}_4$, $H = \mr{A}_4$ and $p=3$. The group $G$ is solvable, and so it is $3$-solvable. Then $d_{3'}(G)=1/2 = d_{3'}(H)$, but $[G:H]_{3'}=2 > 1$, so $(\Delta_{3',H})^G \neq \Delta_{3',G}$ by Theorem~\ref{t:resbij}(2). More explicitly, they differ at the identity element, for example: $\Delta_{3',G}(1)=|G|/|G|_{3'}=3$ and $(\Delta_{3',H})^G(1)=[G:H]\cdot |H|/|H|_{3'}=6$. Regarding the different ``timbres'' of $G$ and~$H$, we have for the sign character $\epsilon \in \Irr{\mr{S}_4}$ that 
\begin{align*}
    [\Delta_{3',G},\epsilon] =\frac18( \epsilon(1) + \epsilon((12)(34)) + \epsilon((12)) + \epsilon((1234))) = \frac18 (1+1-1-1)=0,
\end{align*}
where we recall that the multiplicity $[\Pi_{3',G},\epsilon]$ is simply the sum over the $3$-regular classes of $G$ in the row of the character table corresponding to~$\epsilon$ and $\Delta_{3',G} = (1/|G|_{3'})\Pi_{3',G}$. On the other hand, 
\begin{align*}
    [(\Delta_{3',H})^G,\epsilon] = [\Delta_{3',H},\epsilon|_H]= [\Delta_{3',H},1_H] = d_{3'}(H) = \frac12.
\end{align*}
So the ``fundamental frequency'' (i.e. the multiplicity of $1_G$) does not distinguish $G$ and~$H$, but a ``higher harmonic'' (i.e. the multiplicity of $\epsilon$) does distinguish them.
\end{example}

It is important to note that Theorem~\ref{t:resbij} is vacuous if $G$ is a $p'$-group:

\begin{theorem}
Let $p$ be a prime, and let $H$ be a subgroup of an arbitrary finite group~$G$. Suppose that $|G|$ is not divisible by $p$. Then any of the following statements imply that $G=H$:
\begin{enumerate}[nolistsep,label=(\roman*)]
    \item $(\Delta_{p',H})^G = \Delta_{p',G}$.
    \item $(\Pi_{p',H})^G = \Pi_{p',G}$.
    \item The map $h^H \mapsto h^G$ is a bijection $\mr{Cl}_{p'}(H) \rightarrow \mr{Cl}_{p'}(G)$.
    \item Restriction is a bijection $\mr{IBr}_p(G) \rightarrow \mr{IBr}_p(H)$.
\end{enumerate}
\end{theorem}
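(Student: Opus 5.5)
The plan is to reduce each of (i), (ii) and (iv) to (iii), and then show that (iii) forces $G=H$. Since $p \nmid |G|$, every element of $G$ and of $H$ is $p$-regular. Hence $\Pi_{p',G}=\Pi_G$ is the conjugating character, $\mr{Cl}_{p'}(G)=\mr{Cl}(G)$, and likewise for $H$. Also $|G|_{p'}=|G|$ and $|H|_{p'}=|H|$, so $[G:H]_{p'}=[G:H]$.

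For (i) $\Rightarrow$ (iii), evaluate both sides at the identity, as in Example~\ref{ex:sang}. We have $\Delta_{p',G}(1)=|G|/|G|_{p'}=1$, while $(\Delta_{p',H})^G(1)=[G:H]\cdot|H|/|H|_{p'}=[G:H]$. So (i) gives $[G:H]=1$ directly, and we are done in this case. Alternatively, the computation in the proof of Theorem~\ref{t:resbij}(2), which holds for any $G$, gives $[G:H]_{p'}=1$, which is the same conclusion.

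For (ii) $\Leftrightarrow$ (iii), apply Theorem~\ref{t:Xclasses} with $Y=G_{p'}=G$ and $X=H_{p'}=H$. For (iv) $\Rightarrow$ (iii), use Theorem~\ref{t:resbij}(1) to get (ii), and then (iii). Alternatively, note that for a $p'$-group the Brauer characters are the ordinary irreducible characters, so (iv) says that restriction is a bijection $\Irr{G}\to\Irr{H}$.

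The main step is therefore showing that (iii) implies $G=H$. Under (iii), every $G$-conjugacy class meets $H$, so $G=\bigcup_{g\in G}H^g$. A finite group is never the union of the conjugates of a proper subgroup: there are at most $[G:H]$ distinct conjugates, each of size $|H|$, and all of them share the identity. This gives
\[
\Bigl|\bigcup_{g} H^g\Bigr| \le [G:H](|H|-1)+1 < |G| \quad \text{if } H<G.
\]
Hence $H=G$. Surjectivity of the class map alone already suffices for this, and injectivity is not needed.

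The only step needing care is the reduction of (iv), namely invoking Theorem~\ref{t:resbij}(1) in the right direction. That theorem gives exactly the implication from a restriction bijection to $(\Pi_{p',H})^G=\Pi_{p',G}$, so no obstacle is expected. The coprimality hypothesis is used only to identify $p$-regular elements with all elements, which is what turns the surjection in (iii) into the covering $G=\bigcup_{g} H^g$.
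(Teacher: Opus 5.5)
Your proof is correct. For (ii)$\Leftrightarrow$(iii) and (iii)$\Rightarrow G=H$ it is the paper's argument: Theorem~\ref{t:Xclasses}, plus the fact that a finite group is not covered by the conjugates of a proper subgroup, for which you spell out the count. It differs in the other two cases.

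For (i), the paper cites Theorem~\ref{t:resbij}(2), using that a $p'$-group is $p$-solvable. Your evaluation at the identity, $1=\Delta_{p',G}(1)=(\Delta_{p',H})^G(1)=[G:H]$, is exactly the elementary computation behind the $[G:H]_{p'}=1$ part of that theorem. It makes clear that neither $p$-solvability nor Sangroniz's theorem plays any role here.

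For (iv), the paper does not pass through the class map at all. It uses $\mr{IBr}_p(G)=\Irr{G}$ and $\mr{IBr}_p(H)=\Irr{H}$ together with the degree count $|G|=\sum_{\chi\in\Irr{G}}\chi(1)^2=\sum_{\chi\in\Irr{G}}\chi_H(1)^2=|H|$. Your route also works: Theorem~\ref{t:resbij}(1) gives (iv)$\Rightarrow$(ii) for arbitrary $G$, and then (ii)$\Rightarrow$(iii)$\Rightarrow G=H$. It has the advantage of funnelling all four conditions through the single covering argument. The cost is that it depends on Theorem~\ref{t:resbij}(1), and hence on $|\mr{IBr}_p(G)|=|\mr{Cl}_{p'}(G)|$ and on irreducible Brauer characters separating $p$-regular classes. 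The paper's degree count is a self-contained one-liner once you know that the Brauer characters of a $p'$-group are its ordinary irreducible characters.
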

\begin{proof}
If (i) holds, then $1=[G:H]_{p'}=[G:H]$ by Theorem~\ref{t:resbij}(2) since $G$ is $p$-solvable. Now assume (iii), which is equivalent to (ii) by Theorem~\ref{t:Xclasses}. Let $g \in G$ be an arbitrary element, and assume for contradiction that $H<G$. Since $\mr{Cl}_{p'}(G)=\mr{Cl}(G)$, $g^G \cap H$ is nonempty by assumption. Hence, $g$ is contained in some $G$-conjugate of $H$. But then $G$ is covered by the conjugates of a proper subgroup, a contradiction. Finally, assume~(iv). Then $\mr{IBr}_p(G)=\Irr{G}$ by \cite[Theorem~2.12]{N98}, and the bijection implies that $|G| = \sum_{\chi \in \Irr{G}} \chi_H(1)^2 = |H|$.
\end{proof}

To conclude this section, we show that the class function $\Pi_{X,G}$ is always a character when $X$ is the collection of real elements of $G$; that is, $X$ consists of all elements of $G$ that are conjugate to their inverse.

\begin{theorem}\label{t:real}
Let $G$ be an arbitrary finite group, and let $R \subseteq G$ be the collection of real elements of $G$. Then $\Pi_{R,G}$ is a character of $G$.
\end{theorem}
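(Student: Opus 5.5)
The plan is to find an explicit formula for $\Pi_{R,G}$ as a sum of products of characters, using the second orthogonality relation for columns of the character table. For $g,h \in G$,
\begin{align*}
    \sum_{\chi \in \Irr{G}} \chi(g)\q{\chi(h)} = \begin{cases} |\Cen{G}{g}| & \text{if } h \in g^G, \\ 0 & \text{otherwise.}\end{cases}
\end{align*}
I would apply this with $h = g^{-1}$, so that the nonzero case is exactly $g^{-1} \in g^G$, i.e.\ $g \in R$.

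Since $\q{\chi(g^{-1})} = \chi(g)$ for every character $\chi$, the left-hand side becomes $\sum_{\chi \in \Irr{G}} \chi(g)^2$. The relation therefore reads
\begin{align*}
    \sum_{\chi \in \Irr{G}} \chi(g)^2 = \begin{cases} |\Cen{G}{g}| & \text{if } g \in R, \\ 0 & \text{otherwise,}\end{cases}
\end{align*}
and the right-hand side is $\Pi_{R,G}(g)$ by the definition in Example~\ref{ex:X}. Hence
\begin{align*}
    \Pi_{R,G} = \sum_{\chi \in \Irr{G}} \chi^2 .
\end{align*}

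Each $\chi^2 = \chi\cdot\chi$ is the character of $V\otimes V$, where $V$ affords $\chi$, so it is a genuine character. A sum of characters is a character, so $\Pi_{R,G}$ is a character.

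There is essentially no obstacle once one spots the substitution $h=g^{-1}$ in the column orthogonality. The only points to state carefully are the identity $\q{\chi(g^{-1})}=\chi(g)$ and that products of characters are characters. As an equivalent viewpoint, $\Pi_{R,G}(g)$ counts the $t \in G$ with $g^t = g^{-1}$, which is $|\Cen{G}{g}|$ when $g$ is real and $0$ otherwise.

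As a consistency check, the multiplicity of $1_G$ is
\begin{align*}
    [\Pi_{R,G},1_G] = \sum_{\chi \in \Irr{G}} [\chi^2,1_G] = \sum_{\chi \in \Irr{G}} [\chi,\q{\chi}] ,
\end{align*}
which is the number of real-valued irreducible characters. This agrees with the number of real classes, as it should by Brauer's permutation lemma.
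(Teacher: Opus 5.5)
Your proposal is correct and is essentially the paper's proof: apply column orthogonality to the pair $g, g^{-1}$ to get $\Pi_{R,G} = \sum_{\chi \in \Irr{G}} \chi^2$, then use that products and sums of characters are characters. Your extra check, that $[\Pi_{R,G},1_G]$ counts the real-valued irreducible characters and hence agrees with the number of real classes by Brauer's permutation lemma, is a correct sanity check that the paper does not include.
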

\begin{proof}
Recall that the second orthogonality relation \cite[Theorem~2.18]{I94} applied to the elements $g, g^{-1} \in G$ states that
\begin{align*}
    \sum_{\chi \in \Irr{G}} \chi(g) \q{\chi(g^{-1})}= \begin{cases}
        |\Cen{G}{g}| & \text{if $g$ is conjugate to $g^{-1}$} \\
        0 & \text{otherwise}.
    \end{cases}
    \;=\; \Pi_{R,G}(g).
\end{align*}
Therefore, $\Pi_{R,G}(g) = \sum_{\chi \in \Irr{G}} \chi^2(g)$. Since products of characters are characters (see \cite[Corollary~4.2]{I94}), $\Pi_{R,G}$ is also a character.
\end{proof}

\section{A dual construction}\label{s:dual}
Up to now, we have extended a group invariant $I_G$ to a class function $\mathbf{I}_G$ by encoding the original invariant $I_G$ in the sum of values normalized by $|G|$; that is, $(1/|G|) \sum_{g \in G} \mbf{I}_G(g) = [\mbf{I}_G, 1_G] = I_G$. We now consider a dual construction in which an invariant $I_G$ is encoded in a class function $\mbf{I}_G$ as the sum of multiplicities of the irreducible characters normalized by $|G|$, namely $I_G = (1/|G|) \sum_{\chi \in \Irr{G}} [\mbf{I}_G,\chi]$. We consider one interesting example of this dual construction. For a prime $p$, define the class function (character) $\mbf{D}_{p'}(G) = \sum_{p \nmid \chi(1)} \chi$, where the sum is taken over all irreducible characters of a finite group $G$ whose degree is not divisible by $p$. This gives rise to an invariant
\begin{align*}
    D_{p'}(G) = \frac{1}{|G|} \sum_{\chi \in \Irr{G}} [\mathbf{D}_{p'}(G),\chi] = \frac{|\mathrm{Irr}_{p'}(G)|}{|G|}.
\end{align*}

We first prove a bound for $D_{p'}(G)$. In the proof, we use the recently-proved McKay Conjecture~\cite{cab24}.

\begin{theorem}\label{t:Pabelian}
Let $G$ be a finite group, let $p$ be a prime, and let $P$ be a Sylow $p$-subgroup of $G$. By Schur--Zassenhaus, we may write a Sylow $p$-normalizer as $\Norm{G}{P}=P \rtimes H$. We have
\begin{align*}
    D_{p'}(G) \le \frac{1}{|P'|} \times \frac{1}{[H:\Cen{H}{P/P'}]}.
\end{align*}
\end{theorem}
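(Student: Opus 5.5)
By McKay's Conjecture \cite{cab24}, $|\mathrm{Irr}_{p'}(G)| = |\mathrm{Irr}_{p'}(\Norm{G}{P})|$. I will therefore work entirely inside $N := \Norm{G}{P} = P \rtimes H$ and use only the crude estimate $|G| \ge |N| = |P||H|$ at the end. It then suffices to show
\begin{align*}
    |\mathrm{Irr}_{p'}(N)| \le \frac{|P||H|}{|P'|\,[H:\Cen{H}{P/P'}]} = |P/P'|\,|\Cen{H}{P/P'}|.
\end{align*}

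The first step is to pass to $N/P'$. Since $P \nrm N$, any $\chi \in \mathrm{Irr}_{p'}(N)$ restricts to $P$ as a sum of irreducible characters of $p$-power degree dividing $\chi(1)$, hence of linear characters. So $P' \le \ker\chi$, and $\mathrm{Irr}_{p'}(N) \subseteq \Irr{N/P'}$. Write $A = P/P'$, which is abelian, $X = N/P' = A \rtimes H$ and $K = \Cen{H}{A}$. It is then enough to prove $k(X) \le |A||K|$, or even just the same bound for the $p'$-degree characters of $X$ if the full class number is too large.

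For the second step I would use Clifford theory over the abelian normal subgroup $A$ with the complement $H$. Each $\lambda \in \Irr{A}$ extends to its inertia group $A \rtimes H_\lambda$, because $A$ is abelian and complemented. By Gallagher and the Clifford correspondence, the characters of $X$ lying over the $H$-orbit $O$ of $\lambda$ are in bijection with $\Irr{H_\lambda}$, and have degrees $|O|\psi(1)$ with $\psi \in \Irr{H_\lambda}$. Hence
\begin{align*}
    k(X) = \sum_{O} k(H_\lambda) = \sum_{\lambda \in \Irr{A}} \frac{k(H_\lambda)}{[H:H_\lambda]}.
\end{align*}
Since $K$ acts trivially on $\Irr{A}$, we have $K \le H_\lambda$ for every $\lambda$. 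Moreover $H/K$ acts faithfully on $\Irr{A}$, so $H_\lambda = H$ for all $\lambda$ only when $K = H$. In that case the bound $k(H) \le |H| = |K|$ gives $k(X) \le |A||K|$ immediately.

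The main obstacle is the general case: bounding $\sum_\lambda k(H_\lambda)/[H:H_\lambda]$ by $|A||K|$. The trivial estimate $k(H_\lambda) \le |H_\lambda|$ reduces this to $\sum_\lambda |H_\lambda|^2 \le |A||H||K|$. I would first try to prove this by double counting:
\begin{align*}
    \sum_\lambda |H_\lambda|^2 = \sum_{h_1,h_2 \in H} |\mathrm{Fix}_{\Irr{A}}(\langle h_1,h_2\rangle)|.
\end{align*}
For a pair not contained in $K$, the fixed set is a proper subgroup of $\Irr{A}$, so it has size at most $|A|/p$. If this counting turns out too lossy, I would sharpen $k(H_\lambda)$ using $K \nrm H_\lambda$ (for instance $k(H_\lambda) \le k(K)\,k(H_\lambda/K)$). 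Failing that, I would restrict to $p'$-degree characters only, which forces $p \nmid |O|$ and so discards many orbits. Once the bound is in hand, dividing by $|G| \ge |P||H|$ gives the stated inequality.
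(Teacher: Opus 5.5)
You have correctly reduced the problem, but the central inequality is never proved, and the tools you propose cannot prove it. The reduction itself matches the paper's: McKay together with $|G|\ge|\Norm{G}{P}|$, passage to $X=N/P'=A\rtimes H$, and the target $k(X)\le |A||K|$ with $K=\Cen{H}{A}$. Your orbit formula $k(X)=\sum_O k(H_\lambda)$ is also correct. But when $A$ is elementary abelian and $K=1$, the inequality $\sum_O k(H_\lambda)\le|A|$ \emph{is} the $k(GV)$ theorem for a faithful coprime module. Its proof took decades and relies on the classification of finite simple groups, so no elementary count will give it.

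Concretely, your intermediate inequality $\sum_\lambda|H_\lambda|^2\le|A||H||K|$ is false. Take $p=3$, $A=C_3\times C_3$ and $H$ a Sylow $2$-subgroup of $\mathrm{GL}_2(3)$ (semidihedral of order $16$, acting faithfully, so $K=1$). The term $\lambda=1_A$ alone contributes $|H|^2=256>144=|A||H||K|$. Your double count fares no better: for $K=1$ it bounds the sum by $|A|\bigl(1+(|H|^2-1)/p\bigr)$, which is at most $|A||H|$ only when $|H|<p$. The fallback to $p'$-degree characters discards nothing. Every orbit size $[H:H_\lambda]$ and every $\psi(1)$ with $\psi\in\Irr{H_\lambda}$ is a $p'$-number, so $\mathrm{Irr}_{p'}(X)=\Irr{X}$, as It\^o's theorem also shows.

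What is missing is the $k(GV)$ theorem \cite{GMRS}, applied after reducing to its hypotheses, as the paper does. Your refinement via $K$ is legitimate: since $K\le H_\lambda$ for every $\lambda$, summing over orbits gives $k(X)\le k(K)\,k(A\rtimes H/K)$. This only removes $K$, though. The paper removes $K$ and passes to the Frattini quotient at once, by applying Gallagher's inequality \cite{Ga70} to the normal subgroup $M=\Phi(A)K$ of $X$. This gives $k(X)\le|\Phi(A)||K|\,k(X/M)$ with $X/M\cong(A/\Phi(A))\rtimes(H/K)$. Here $H/K$ acts faithfully on the $\mathbb{F}_p$-space $A/\Phi(A)$, because a $p'$-automorphism of $A$ that is trivial on $A/\Phi(A)$ is trivial on $A$. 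The $k(GV)$ theorem then yields $k(X/M)\le|A/\Phi(A)|$, hence $k(X)\le|A||K|$. Without this deep input, or an equivalent one, your argument does not close.
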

\begin{proof}
Denote the quantity on the right-hand side of the inequality in the statement of the theorem by $b(G)$. Suppose for contradiction that the theorem is false. Let $G$ be a counterexample of minimal order, so $b(G) < D_{p'}(G)$.  We have $|\mathrm{Irr}_{p'}(G)| = |\mathrm{Irr}_{p'}(\Norm{G}{P})|$ by the McKay Conjecture, so
\begin{align*}
   b(G) < \frac{|\mathrm{Irr}_{p'}(G)|}{|G|} = \frac{|\mathrm{Irr}_{p'}(\Norm{G}{P})|}{|G|} \le \frac{|\mathrm{Irr}_{p'}(\Norm{G}{P})|}{|\Norm{G}{P}|}=D_{p'}(\Norm{G}{P}). 
\end{align*}
If $|\Norm{G}{P}|<|G|$, then $\Norm{G}{P}$ satisfies the conclusion of the theorem by the minimality of $|G|$. But this is a contradiction since $b(G)=b(\Norm{G}{P})$ depends only on the local subgroup~$\Norm{G}{P}$. So we are left to consider the case that $\Norm{G}{P} = G$. Now, every $\chi \in \mathrm{Irr}_{p'}(G)$ restricted to~$P$ must be a sum of linear characters of $P$ by Clifford's theorem, so $P' \le \ker \chi$. This means we may consider $\chi$ a character of $G/P'$, and so $|\mathrm{Irr}_{p'}(G)| \le |\Irr{G/P'}|$. Conversely, since $P/P'$ is an abelian normal subgroup of $G/P'$, we have for all $\chi \in \Irr{G/P'}$ that $\chi(1)$ divides $[G/P':P/P']=[G:P]$ by It{\^o}'s Theorem~\cite[Theorem~6.15]{I94}. So lifting $\chi$ to a character of $G$ yields an irreducible character of $p'$-degree. Hence, $|\mathrm{Irr}_{p'}(G)| = |\Irr{G/P'}|=k(G/P')$. 

Now we work in the quotient $G/P'=\q{G}=\q{P} \rtimes \q{H}$, which has a normal, abelian Sylow $p$-subgroup~$\q{P}$. Note that $\Phi(\q{P}) \nrm \q{G}$, being characteristic in $\q{P} \nrm \q{G}$. Moreover, $C_{\q{H}}(\q{P}) \nrm \q{G}$: it is normalized by $\q{H}$ since $\q{P} \nrm \q{G}$, and it is centralized, hence normalized, by $\q{P}$; so it is normal in $\q{G} = \q{P} \, \q{H}$. Recall that if $\q{N} \nrm \q{G}$, then $k(\q{G}) \le k(\q{N}) k(\q{G/N})$ by~\cite{Ga70}. Letting $\q{N} = \Phi(\q{P}) \Cen{\q{H}}{\q{P}}$ so that $|\q{N}|=|\Phi(\q{P})||\Cen{\q{H}}{\q{P}}|$, these being a $p$-group and a $p'$-group, and using the obvious inequality \mbox{$k(\q{N}) \le |\q{N}|$}, we then have
\begin{align*}
    k(\q{G}) \le |\Cen{\q{H}}{\q{P}}| \cdot |\Phi(\q{P})| \cdot k(\q{P}/\Phi(\q{P}) \rtimes \q{H}/\Cen{\q{H}}{\q{P}}).
\end{align*}
Now, the coprime action of $\q{H}/\Cen{\q{H}}{\q{P}}$ on the elementary abelian group $\q{P}/\Phi(\q{P})$ is faithful by a coprime action theorem~\cite[Corollary~3.30]{I08}; so $k(\q{P}/\Phi(\q{P}) \rtimes \q{H}/\Cen{\q{H}}{\q{P}}) \le |\q{P}/\Phi(\q{P})|$ by the $k(GV)$ theorem~\cite{GMRS}. Additionally, $\q{H} \cong H$ since $P' \cap H = 1$. Altogether, we then finally have that
\begin{align*}
    |\mr{Irr}_{p'}(G)| = k(G/P') \le |\Cen{H}{P/P'}| \, |P/P'|.
\end{align*}
But then upon dividing through by $|G|=|P||H|$, we find that
\begin{align*}
    D_{p'}(G) \le \frac{|P/P'| |\Cen{H}{P/P'}|}{|P||H|}=b(G),
\end{align*}
a contradiction.
\end{proof}

As a corollary, we deduce a criterion for detecting abelian Sylow $p$-subgroups.

\begin{corollary}
Let $G$ be a finite group, $p$ a prime and $P$ a Sylow $p$-subgroup of $G$. If $D_{p'}(G) > 1/p$, then $P$ is abelian. If, in addition, $p$ is the smallest prime dividing $|G|$, then~$G$ has a normal $p$-complement.
\end{corollary}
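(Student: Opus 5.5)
We derive both assertions from Theorem~\ref{t:Pabelian}, which gives
\begin{align*}
    \frac{1}{p} < D_{p'}(G) \le \frac{1}{|P'|} \cdot \frac{1}{[H:\Cen{H}{P/P'}]},
\end{align*}
where $\Norm{G}{P} = P \rtimes H$. Both factors on the right are at most $1$. Moreover, $|P'|$ is a power of $p$. If $P' \neq 1$, then $|P'| \ge p$, so the right-hand side is at most $1/p$, which contradicts the displayed inequality. Hence $P' = 1$, that is, $P$ is abelian. This settles the first assertion.

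For the second assertion, $P$ is now abelian, so $P/P' = P$ and the bound reads
\begin{align*}
    \frac{1}{p} < \frac{1}{[H:\Cen{H}{P}]}, \quad\text{i.e.}\quad [H:\Cen{H}{P}] < p.
\end{align*}
The index $[H:\Cen{H}{P}]$ divides $|H|$, which divides $|G|$ and is coprime to $p$. Since $p$ is the smallest prime divisor of $|G|$, every prime divisor of this index is larger than $p$. So if the index were greater than $1$, it would be at least some prime exceeding $p$, contradicting $[H:\Cen{H}{P}] < p$. Therefore $H = \Cen{H}{P}$.

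It follows that $\Norm{G}{P} = P \rtimes H = P \times H$ centralizes $P$, so $\Norm{G}{P} = \Cen{G}{P}$. Burnside's normal $p$-complement theorem (with $P$ abelian, so $P \le \Zen{\Norm{G}{P}}$) then gives a normal $p$-complement. The step that needs the most care is the index argument above: every divisor greater than $1$ of the $p'$-number $|H|$ exceeds $p$, so the strict inequality forces the index to equal $1$. Everything else follows directly from Theorem~\ref{t:Pabelian}.
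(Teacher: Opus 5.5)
Your proposal is correct and follows essentially the same argument as the paper. Both read off $|P'|\,[H:\Cen{H}{P/P'}] < p$ from Theorem~\ref{t:Pabelian} and conclude $P'=1$ since $|P'|$ is a power of $p$; both then use that $[H:\Cen{H}{P}]$ is a $p'$-number smaller than the least prime divisor of $|G|$ to get $P \le \Zen{\Norm{G}{P}}$, and apply Burnside's transfer theorem. The only difference is that you spell out the divisibility step, which the paper leaves implicit.
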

\begin{proof}
By Theorem~\ref{t:Pabelian}, $1/p < D_{p'}(G) \le b(G)$; that is, $p > |P'| [H:\Cen{H}{P/P'}]$. Therefore, $|P'|=1$ and $[H:\Cen{H}{P}]<p$. In particular, $P$ is abelian. If, in addition, $p$ is the smallest prime dividing $|G|$, then $[H:\Cen{H}{P}]=1$. Therefore, $P \le \Zen{\Norm{G}{P}}$, and so~$G$ has a normal $p$-complement by Burnside's transfer theorem \cite[Theorem~5.13]{I08}.
\end{proof}

This criterion is best possible since an extraspecial $p$-group $P$ of order $p^{1+2m}$ has $p^{2m}$ representations of degree $1$ and $p-1$ irreducible representations of degree $p^m$. Therefore, $|\mathrm{Irr}_{p'}(P)|=p^{2m}$, so $D_{p'}(P)=1/p$ and $P$ is not abelian.

\, \\
\noindent \textbf{Acknowledgements.} We thank Attila Mar{\'o}ti for his useful comments on a previous version.

\, \\
\noindent \textbf{Statement on use of AI.} Claude Opus 5 found Examples~\ref{ex:gen1} and~\ref{ex:X}, given the author's Example~\ref{ex:pi}, as well as Example~\ref{ex:refine} and the proof of Theorem~\ref{t:real}. The paper was written by the author, and Claude was used for proofreading. The author takes full responsibility for all the results.


\end{document}